\numberwithin{equation}{section}
\renewcommand{\thefigure}{\thesection.\arabic{figure}}
\newtheorem{Theorem}{Theorem}[section]
\newtheorem{Lemma}{Lemma}[section]
\newtheorem{Proposition}{Proposition}[section]
\newtheorem{Remark}{\textbf{Remark}}[section]
\def\<{\langle}
\def\>{\rangle}
\begin{document}
\title[Characterizations of umbilical hypersurfaces by partially overdetermined problems]{Characterizations of umbilical hypersurfaces by partially overdetermined problems in space forms}

\author{Yangsen Xie}
\address{Yangsen Xie, School of Mathematical Sciences\\Xiamen University\\361005, Xiamen, P. R. China}
\email{19020211153416@stu.xmu.edu.cn}

\begin{abstract}
    In this paper, we characterize the rigidity of umbilical hypersurfaces by a Serrin-type partially overdetermined problem in space forms, which generalizes the similar results in Euclidean half-space \cite{jia2023rigidity} and Euclidean half-ball \cite{jia2023characterization}. Guo-Xia first obtained these rigidity results \cite{guo2019partially,guo2022partially} when the Robin boundary condition on the support hypersurface is homogeneous, at this time the target umbilical hypersurface has orthogonal contact angle with the support. However, in this paper we can obtain any contact angle $\theta\in (0,\pi)$ by changing the Robin boundary condition to be inhomogeneous.
\end{abstract}
	
\date{\today}
\keywords{Rigidity, umbilical hypersurfaces, Serrin's overdetermined problem, space forms, contact angle}
	
\maketitle
		
\section{Introduction}
	In \cite{serrin1971symmetry}, Serrin initiated the study of overdetermined boundary value problem (BVP) in a bounded domain $\Omega\subset\mathbb{R}^{n+1}$:
\begin{align}\label{Serrin}
\left\{
\begin{aligned}
  \overline{\triangle} u=1 \quad&\rm{in} \;\Omega,\\
  u=0 \quad&\rm{on}\;\partial\Omega,\\
  \partial_\nu u=c \quad&\rm{on}\; \partial\Omega,
\end{aligned}
\right.
\end{align}
where $c\in\mathbb{R}$ is a constant and $\nu$ is the unit outward normal to $\partial\Omega$. He proved that \eqref{Serrin} admits a solution if and only if $\partial\Omega$ is a round sphere and the solution $u$ is radically symmetric. Serrin's proof is based on the moving plane method, which was invented by Alexandrov when he proved the well-known Alexandrov soap bubble theorem \cite{aleksandrov1962uniqueness}: any embedded closed hypersurface of constant mean curvature (CMC) in $\mathbb{R}^{n+1}$ must be a round sphere. Soon after Serrin's paper, Weinberger \cite{weinberger1971remark} gave a new proof by means of more elementary arguments. Due to the works of Serrin and Weinberger, a number of overdetermined problems of general elliptic equations in Euclidean were studied, one may refer to \cite{birindelli2013overdetermined, brandolini2008serrin, buttazzo2011overdetermined, cianchi2009overdetermined, farina2008remarks, farina2010flattening, fragala2006overdetermined, garofalo1989symmetry, lu2012overdetermined, wang2011characterization}. Serrin's symmetry result was generalized to space forms in \cite{kumaresan1998serrin,molzon1991symmetry} by using the moving plane method.

    In \cite{guo2019partially,guo2022partially}, Guo-Xia studied a Serrin-type partially overdetermined BVP in space forms $\mathbb{M}^{n+1}(K)$ ($K=0,-1,1$). Precisely, let $S_{K,\kappa}$ be an umbilical hypersurface with principal curvature $\kappa>0$ in $\mathbb{M}^{n+1}(K)$, $\Sigma$ be a hypersurface supported on $S_{K,\kappa}$, $\Omega$ be the domain enclosed by $\Sigma$ and $S_{K,\kappa}$. Then, they considered the following partially overdetermined BVP in $\Omega$,
\begin{align}
\left\{
\begin{aligned}
\overline{\triangle}u+(n+1)Ku=1\quad&\text{in}\,\,\Omega,\\
u=0\quad&\text{on}\,\,\overline{\Sigma},\\
\partial_\nu u=c\quad&\text{on}\,\,\overline{\Sigma},\\
\partial_{\bar{N}}u=\kappa u\quad&\text{on}\,\,\partial\Omega\backslash\overline{\Sigma},
\end{aligned}
\right.\label{Guo}
\end{align}
where $c\in\mathbb{R}$ is a constant, $\nu$ and $\bar{N}$ are the outward unit narmal to $\Sigma$ and $S_{K,\kappa}$ respectively. By using purely integral method, Guo-Xia obtained the following theorem:
\begin{Theorem}
Assume the partially overdetermined BVP \eqref{Guo} admits a weak solution $u\in W_0^{1,2}(\Omega,\Sigma)$, i.e.
$$\int_\Omega\left[\bar{g}(\overline{\nabla}u,\overline{\nabla}v)+v-(n+1)Kuv\right]dx=\kappa\int_{\partial\Omega\backslash\overline{\Sigma}}uvdA,\,\,\forall\,v\in W_0^{1,2}(\Omega,\Sigma),$$
together with an additional boundary condition $\partial_\nu u=c$ on $\Sigma$. Assume further that $u\in W^{1,\infty}(\Omega)\cap W^{2,2}(\Omega)$. Then $c>0$ and $\Sigma$ must be part of an umbilical hypersurface with principal curvature $\frac{1}{(n+1)c}$ which intersects $S_{K,\kappa}$ orthogonally.
\label{guotheorem}
\end{Theorem}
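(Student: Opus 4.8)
The plan is to prove the rigidity by Reilly's integral formula together with the Cauchy--Schwarz inequality for the Hessian, the natural ``purely integral'' route and the one compatible with the stated regularity $u\in W^{1,\infty}(\Omega)\cap W^{2,2}(\Omega)$. The organizing principle is that the equation $\overline{\Delta}u+(n+1)Ku=1$ forces, in the rigid case, the umbilical Hessian identity $\overline{\nabla}^2 u=\frac{\overline{\Delta}u}{n+1}\,\bar g=\big(\frac{1}{n+1}-Ku\big)\bar g$; since the regular level sets of such a $u$ are umbilical and $\Sigma=\{u=0\}$ is one of them, establishing this pointwise identity on $\Omega$ is essentially the whole theorem. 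The inequality that produces it is $|\overline{\nabla}^2 u|^2\ge\frac{1}{n+1}(\overline{\Delta}u)^2$, which holds on the $(n+1)$-dimensional $\Omega$ and is an equality exactly when $\overline{\nabla}^2 u$ is a pointwise multiple of $\bar g$.

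First I would record the two elementary identities obtained by integrating the equation against $1$ and against $u$, using $u=0$, $\partial_\nu u=c$ on $\overline{\Sigma}$ and the Robin condition $\partial_{\bar N}u=\kappa u$ on $T:=\partial\Omega\setminus\overline{\Sigma}$ (on $T$ the outward normal of $\Omega$ is $\bar N$). These express $\int_\Omega u$ and $\int_\Omega|\overline{\nabla}u|^2$ through $|\Omega|$, $c|\Sigma|$, and the weighted boundary integrals $\int_T u$, $\int_T u^2$, and they also fix the sign $c>0$. I would then apply Reilly's formula on $\Omega$, using $\overline{\mathrm{Ric}}=nK\bar g$ to convert the curvature term into $nK\int_\Omega|\overline{\nabla}u|^2$, and evaluate the boundary integrand $H\,u_\nu^2+2u_\nu\Delta_{\partial\Omega}(u|_{\partial\Omega})+\mathrm{II}(\overline{\nabla}(u|_{\partial\Omega}),\overline{\nabla}(u|_{\partial\Omega}))$ separately on $\Sigma$ and on $T$.

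The boundary bookkeeping is the technical heart. On $\Sigma$ the Dirichlet condition kills the tangential terms and leaves only $c^2\int_\Sigma H_\Sigma$; on $T$ the umbilicity of the support ($\mathrm{II}=\kappa g_T$, $H_T=n\kappa$) together with $\partial_{\bar N}u=\kappa u$ collapses the integrand, and one integration by parts on $T$ produces a contact-curve term $\kappa\oint_{\partial\Sigma}u\,\partial_n u$ that vanishes because $u\equiv 0$ on $\overline{\Sigma}\supset\partial\Sigma$. Substituting the Cauchy--Schwarz bound $(\overline{\Delta}u)^2-|\overline{\nabla}^2 u|^2\le\frac{n}{n+1}(\overline{\Delta}u)^2$ into Reilly and then eliminating $\int_\Omega u$, $\int_\Omega u^2$, $\int_\Omega|\overline{\nabla}u|^2$ via the elementary identities reduces the statement to a single scalar inequality; to close it I would invoke the Minkowski-type formula in $\mathbb{M}^{n+1}(K)$ to control the remaining term $c^2\int_\Sigma H_\Sigma$, with the overdetermination $\partial_\nu u=c$ arranged so that the inequality saturates.

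Equality then forces $\overline{\nabla}^2 u=\frac{\overline{\Delta}u}{n+1}\bar g$ throughout $\Omega$. Restricting to $\Sigma=\{u=0\}$, where $\overline{\Delta}u=1$ and $\overline{\nabla}u=c\,\nu$ (so $|\overline{\nabla}u|=c>0$), the second fundamental form of $\Sigma$ equals $\overline{\nabla}^2 u/|\overline{\nabla}u|=\frac{1}{(n+1)c}\,g_\Sigma$, i.e. $\Sigma$ is umbilical with principal curvature $\frac{1}{(n+1)c}$. Orthogonality is then immediate and in fact independent of rigidity: along $\partial\Sigma$ one has $\overline{\nabla}u=c\,\nu$ and, from the Robin condition, $0=\partial_{\bar N}u=c\langle\nu,\bar N\rangle$, whence $\nu\perp\bar N$ and the contact angle is $\pi/2$ (this is exactly the homogeneity of the Robin condition forcing the right angle, to be relaxed later). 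I expect the main obstacle to be the analysis on the support: making the Robin coefficient $\kappa$ cancel precisely the terms generated by the second fundamental form of $S_{K,\kappa}$, correctly installing the Minkowski-type formula in the space form, and rigorously justifying Reilly's formula and the integration by parts across the non-smooth contact set $\partial\Sigma$ for a merely $W^{1,\infty}\cap W^{2,2}$ solution.
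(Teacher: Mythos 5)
Your proposal takes a genuinely different route from the paper: the paper does not re-prove Theorem \ref{guotheorem} by Reilly's formula, but obtains it as the special case $\widetilde{c}=0$ of Theorem \ref{Theorem} (where $u\leq 0$ is automatic), whose proof rests on the weighted identity \eqref{arbitrary}: one multiplies $\overline{\triangle}P_u\geq 0$ by the nonnegative weight $-Vu$, where $V$ comes from a conformal Killing field and satisfies $\overline{\nabla}^2V=-KV\bar{g}$, and shows that all boundary contributions on $T$ cancel thanks to the Robin condition, \eqref{V2} and \eqref{Hessianu}. Your Reilly-plus-Cauchy--Schwarz strategy is the classical Weinberger route and does work for the full Dirichlet problem in $\mathbb{R}^{n+1}$, but as written it has a genuine gap at exactly the step you defer: ``eliminating the bulk integrals via the elementary identities and closing with a Minkowski-type formula.'' For $K\neq 0$ the \emph{unweighted} first integrals (testing the equation against $1$ and $u$) produce $\int_\Omega u$, $\int_\Omega u^2$, $\int_\Omega|\overline{\nabla}u|^2$, while the space-form Minkowski formula \eqref{minkowskiformula} controls $\int_\Sigma H\,\bar{g}(\nu,X)$ --- not the quantity $c^2\int_\Sigma H$ that the unweighted Reilly boundary term delivers. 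There is no identity converting one into the other without introducing the weight $V$ (equivalently $\dot\psi$), and this mismatch is precisely why Guo--Xia and the present paper work with $V$ throughout, either inside a weighted Reilly formula or as the multiplier $-Vu$. Your sketch never verifies that the resulting scalar inequality actually saturates, and I do not believe it does in the form you set it up.

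A second, smaller gap is the claim that the $T$-contribution in Reilly's formula ``collapses.'' With $\mathrm{II}_T=\kappa g_T$, $H_T=n\kappa$ and $\partial_{\bar N}u=\kappa u$, the integrand $H_Tu_{\bar N}^2+2u_{\bar N}\Delta_T(u|_T)+\mathrm{II}_T(\nabla^T u,\nabla^T u)$ reduces, after the integration by parts you describe (the contact-curve term does vanish since $u=0$ on $\Gamma$), to $\int_T\bigl(n\kappa^3u^2-\kappa|\nabla^Tu|^2\bigr)dA$, which is not zero for $\kappa>0$ and must be absorbed into the bulk terms; you would need to check its sign and magnitude against the Cauchy--Schwarz defect, which again is where the weight $V$ earns its keep. (Your endgame --- equality in $|\overline{\nabla}^2u|^2\geq\frac{(\overline{\triangle}u)^2}{n+1}$ forcing $h_{ij}=\frac{1}{(n+1)c}\bar{g}_{ij}$ on $\Sigma$, and orthogonality from $0=\partial_{\bar N}u=c\,\bar{g}(\nu,\bar N)$ on $\Gamma$ --- is fine once rigidity is in hand, and is simpler than the paper's integral characterization of the angle, which is designed for the general $\widetilde{c}\neq 0$ case.)
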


\begin{Remark}
\begin{enumerate}
  \item[\rm (i)] Actually, $\Omega$ should lie in a more precise domain: $B_{K,\kappa}^{\rm{int}}$ or $B_{K,\kappa}^{\rm{int},+}$. When $S_{K,\kappa}$ is a geodesic sphere, we use $B_{K,\kappa}^{\rm{int},+}$, for other types of umbilical hypersurface we use $B_{K,\kappa}^{\rm{int}}$ (See Section \ref{section2} below for the details). For notation simplicity and unification, we emphasize here that: in the following, we use $\Omega\subset B_{K,\kappa}^{\rm{int}}$ to indicate that $\Omega\subset B_{K,\kappa}^{\rm{int},+}$ in the case that $S_{K,\kappa}$ is a geodesic sphere.
  \item[\rm (ii)] The case $\kappa=0$ in $\mathbb{M}^{n+1}(K)$ was also solved in \cite{ciraolo2020serrin} (as a special case of a flat cone).
\end{enumerate}

\end{Remark}

    After the works of Guo-Xia, Jia-Lu-Xia-Zhang replaced the Robin condition in \eqref{Guo} with $\partial_{\bar{N}}u=\kappa u+\widetilde{c}$ and studied the new problem in \cite{jia2023characterization,jia2023rigidity}, where $\widetilde{c}$ is also a constant. Precisely, they considered the following new partially overdetermined problem in a bounded domain $\Omega$ in half-space $\mathbb{R}_+^{n+1}$ or half-ball $\mathbb{B}_+^{n+1}$ ($K=0$):
\begin{align}
\left\{
\begin{aligned}
\overline{\triangle}u=1\quad&\text{in}\,\,\Omega,\\
u=0\quad&\text{on}\,\,\overline{\Sigma},\\
\partial_\nu u=c\quad&\text{on}\,\,\overline{\Sigma},\\
\partial_{\bar{N}}u=\kappa u+\widetilde{c}\quad&\text{on}\,\,T,
\end{aligned}
\right.\label{Jia}
\end{align}
where $\Sigma=\partial\Omega\cap\mathbb{R}_+^{n+1}$ or $\Sigma=\partial\Omega\cap\mathbb{B}_+^{n+1}$, $T=\partial\Omega\backslash\overline{\Sigma}$, $\kappa$ is the principal curvature of the boundary hypersurface of $\mathbb{R}_+^{n+1}$ ($\kappa=0$) or $\mathbb{B}^{n+1}$ ($\kappa=1$). By establishing new integral identities, they proved the following Serrin-type rigidity result:
\begin{Theorem}
If the partially overdetermined problem \eqref{Jia} admits a weak solution $u\in W^{1,\infty}(\Omega)\cap W^{2,2}(\Omega)$ such that $u\leq 0$, then $c>0$ and $\Sigma$ must be a spherical cap with radius $(n+1)c$ which intersects $\partial\mathbb{R}_+^{n+1}$ or $\partial\mathbb{B}^{n+1}$ at a constant contact angle $\theta$, characterized by $\cos\theta=-\frac{\widetilde{c}}{c}$.
\label{Jiatheorem}
\end{Theorem}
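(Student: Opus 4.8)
The plan is to adapt Weinberger's $P$-function method, which underlies the purely integral approach to Serrin-type problems, to this mixed boundary-value setting with an inhomogeneous Robin condition. I would introduce the auxiliary $P$-function $P = |\overline{\nabla} u|^2 - \frac{2}{n+1}u$, which is tailored to the equation $\overline{\triangle}u = 1$ so that a Bochner-type computation combined with Cauchy--Schwarz gives $\overline{\triangle}P \geq 0$ in $\Omega$, with equality forcing $\overline{\nabla}^2 u = \frac{1}{n+1}\bar{g}$, i.e. $u$ is (up to the flat model) a radial quadratic whose level sets are spheres. The subharmonicity of $P$ is the engine of the rigidity; the entire difficulty is relocated to extracting the pointwise Hessian identity from an \emph{integral} identity under the two-sided boundary data, so that no regularity of $\partial\Omega$ needs to be assumed a priori.

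The central step is to establish a Reilly-type integral identity and use it to bound $\int_\Omega (\overline{\triangle}u)^2$ from below by $\int_\Omega |\overline{\nabla}^2 u|^2$, then show the two agree. First I would write $\int_\Omega |\overline{\nabla}^2 u|^2 = \int_\Omega (\overline{\triangle}u)^2 + \text{(boundary terms)}$ via Reilly's formula (in $\mathbb{R}^{n+1}$, so $K=0$), splitting $\partial\Omega = \overline{\Sigma} \cup T$. On $\overline{\Sigma}$ one uses $u=0$ and $\partial_\nu u = c$; on $T$ one uses the inhomogeneous Robin condition $\partial_{\bar N}u = \kappa u + \widetilde c$. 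The key algebraic maneuver is to feed the overdetermined data into the boundary integrand so that, after integrating by parts on $\overline{\Sigma}$ (using that $u$ vanishes there, so tangential derivatives vanish and only the second fundamental form of $\Sigma$ survives) and on $T$ (using the Robin relation and the fact that $\kappa$ is the curvature of the totally geodesic or spherical support), every boundary contribution is manifestly nonpositive. Combined with Cauchy--Schwarz $|\overline{\nabla}^2 u|^2 \geq \frac{1}{n+1}(\overline{\triangle}u)^2$, this pinches all inequalities to equalities. The hard part will be handling the corner $\partial\Omega \cap \partial\overline{\Sigma}$ where $\Sigma$ meets $T$: one must justify the integration by parts across the contact set and confirm the boundary terms there either cancel or have a favorable sign, which is exactly where the contact-angle information should emerge rather than be assumed.

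Once equality is forced, I would read off the geometric conclusion. The Hessian identity $\overline{\nabla}^2 u = \frac{1}{n+1}\bar g$ integrates to show $u$ is a quadratic of the form $\frac{1}{2(n+1)}|x - x_0|^2 + \text{const}$, so $\{u=0\} = \overline{\Sigma}$ is a piece of the sphere centered at $x_0$, and $\partial_\nu u = c$ being constant on $\overline{\Sigma}$ fixes the radius as $(n+1)c$, simultaneously giving $c>0$ since the outward normal derivative of a convex function at its zero level set is positive. The umbilicity with principal curvature $\frac{1}{(n+1)c}$ is then immediate, matching the radius. Finally, to pin down the contact angle I would evaluate the Robin condition on $T$ at the contact set: since $\partial_{\bar N}u = \kappa u + \widetilde c$ and $u=0$ along $\overline{\Sigma}\cap T$, the relation $\partial_{\bar N}u = \widetilde c$ there, together with $\partial_\nu u = c$ and the geometry of the sphere meeting the support, yields $\cos\theta = -\widetilde c / c$ by decomposing $\overline{\nabla}u$ at the contact points into its components along $\nu$ and $\bar N$. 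The sign condition $u \leq 0$ is what guarantees the correct orientation so that the spherical cap lies on the physically admissible side and the angle lands in $(0,\pi)$.
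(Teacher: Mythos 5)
Your overall architecture (subharmonic $P$-function, an integral identity that vanishes under the overdetermined data, pinching to $\overline{\nabla}^2u=\frac{1}{n+1}\bar g$) is the right one, but the central step as you describe it has a genuine gap, and it sits exactly where the novelty of the inhomogeneous problem lies. A plain Reilly decomposition of $\int_\Omega\bigl(|\overline{\nabla}^2u|^2-(\overline{\triangle}u)^2\bigr)$ produces boundary terms on $T$ involving the inhomogeneity $\widetilde c$ that are \emph{not} manifestly nonpositive, and no sign can be extracted from them without two ingredients your proposal never supplies. First, the identity must be weighted by the multiplier $-Vu$ (with $V>0$ coming from a conformal Killing field; $V\equiv 1$ for the half-space, a nontrivial positive weight for the half-ball), because the hypothesis $u\le 0$ --- a genuine hypothesis of the theorem, not automatic when $\widetilde c>0$ --- has to enter through the pointwise sign $-Vu\,\overline{\triangle}P\ge 0$; your proposal only invokes $u\le0$ at the very end for ``orientation,'' which cannot be where it is used. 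Second, to kill the $T$-contributions one needs the auxiliary function $\varphi$ solving \eqref{resolvent}, i.e.\ with Hessian proportional to $\bar g$ \emph{and} satisfying the same inhomogeneous Robin condition on the support: replacing $u$ by $u-\varphi$ in the boundary integrand gives $V(u-\varphi)_{\bar N}-(u-\varphi)V_{\bar N}=0$ on $T$, and the remaining Hessian term dies because differentiating the Robin condition tangentially along the umbilical support yields $\overline{\nabla}^2u(\bar N,Z)=0$ for $Z$ tangent to $T$, as in \eqref{Hessianu}. Without $\varphi$ the terms proportional to $\widetilde c$ survive and the identity does not close; this is precisely what separates this theorem from the homogeneous Guo--Xia case. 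A smaller but real error: bounding $\int(\overline{\triangle}u)^2$ from below by $\int|\overline{\nabla}^2u|^2$ and combining with $|\overline{\nabla}^2u|^2\ge\frac{1}{n+1}(\overline{\triangle}u)^2$ does not pinch (a factor $n+1$ remains); the correct quantity is the trace-free Hessian, $\overline{\triangle}P=\bigl|\overline{\nabla}^2u-\tfrac{\overline{\triangle}u}{n+1}\bar g\bigr|^2$, and one shows $\int_\Omega(-Vu)\,\overline{\triangle}P\,dx=0$ as in \eqref{identity} with $a=c^2$.

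Your endgame is in better shape. The extraction of umbilicity and the radius from the Hessian identity is standard and correct, and your contact-angle computation is actually simpler than the route taken here: once rigidity forces $u$ to be the explicit quadratic, $\overline{\nabla}u=c\,\nu$ along $\overline\Sigma$ extends smoothly to $\Gamma$, and pairing with $\bar N$ via \eqref{vectors4} gives $\widetilde c=\partial_{\bar N}u=c\,\bar g(\nu,\bar N)=-c\cos\theta$. The paper instead passes through a Minkowski formula and a mean-curvature identity, which avoids appealing to regularity of $\overline{\nabla}u$ at the corner before rigidity is established; after rigidity your pointwise argument is legitimate. In summary: the conclusion-extraction is acceptable, but the proof is missing the weighted multiplier $-Vu$ (hence the essential use of $u\le0$) and the auxiliary function $\varphi$ that cancels the $T$-boundary terms generated by $\widetilde c$.
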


    The aim of this paper is to generalize Theorem \ref{Jiatheorem} into the setting of domains with partial umbilical boundary in space forms.

    Let $(\mathbb{M}^{n+1}(K),\bar{g})$ be a complete simply-connected Riemannian manifold with constant sectional cunvature $K$. We only need to consider
$K=0,1,-1$ by scaling. $K=0$ corresponds to the Euclidean space $(\mathbb{R}^{n+1},\delta)$, $K=1$ the unit sphere $\mathbb{S}^{n+1}$ with round metric, and $K=-1$ the hyperbolic space $\mathbb{H}^{n+1}$.

    We state some facts about umbilical hypersurfaces in $\mathbb{M}^{n+1}(K)$. The principal curvature of an umbilical hypersurface is a constant
$\kappa\in\mathbb{R}$. We may assume $\kappa\geq 0$ by choosing a normal vector field $\bar{N}$ of the hypersurface. As is well-known, all umbilical hypersurfaces in $\mathbb{R}^{n+1}$ and $\mathbb{S}^{n+1}$ are geodesic spheres ($\kappa>0$) and totally geodesic hyperplanes ($\kappa=0$). The cases in $\mathbb{H}^{n+1}$ are a bit more complicated, all of which are listed: geodesic spheres ($\kappa>1$), totally geodesic hyperplanes ($\kappa=0$), horospheres ($\kappa=1$) and equidistant hypersurfaces ($0<\kappa<1$). For visualizations of these umbilical hypersurfaces, see Section \ref{section2}.

    We denote an umbilical hypersurface in $\mathbb{M}^{n+1}(K)$ with principal curvature $\kappa$ as $S_{K,\kappa}$, then it will divides $\mathbb{M}^{n+1}(K)$
into two regions. We use $B_{K,\kappa}^{\rm int}$ to denote the region whose outward normal is $\bar{N}$ that has been selected before, and denote the other one by $B_{K,\kappa}^{\rm ext}$.

    Let $\Omega\subset B_{K,\kappa}^{\rm int}$ be a bounded domain with boundary $\partial\Omega=\overline{\Sigma}\cup T$, where $\Sigma\subset B_{K,\kappa}^{\rm int}$ is a smooth open hypersurface and $T=\partial\Omega\backslash\overline{\Sigma}\subset S_{K,\kappa}$ meets $\overline{\Sigma}$ at a common $(n-1)$-dimensional submanifold $\Gamma\subset S_{K,\kappa}$. We consider the following mixed BVP:
\begin{align}\label{BVP}
\left\{
\begin{aligned}
  \overline{\triangle}u+(n+1)Ku=1 \quad&\text{in} \,\,\Omega,\\
  u=0 \quad&\text{on}\,\,\overline{\Sigma},\\
  \partial_{\bar{N}}u=\kappa u+\widetilde{c}\quad&\text{on}\,\, T,
\end{aligned}
\right.
\end{align}
where $\widetilde{c}\in\mathbb{R}$ is a constant.

\begin{Remark}
    \begin{enumerate}
      \item[\rm (i)] There exists a unique weak solution $u\in W_0^{1,2}(\Omega,\Sigma)$ to \eqref{BVP} for every $\kappa\geq 0$. The case $\kappa>0$ is proved in \cite[Propostion 3.3]{guo2022partially}, and we add the proof of the case $\kappa=0$ in \hyperlink{AppendixB}{Appendix B}.
      \item[\rm (ii)] By the classical regularity theory for elliptic equations, we know $u\in C^\infty(\overline{\Omega}\backslash\Gamma)$, and $u\in C^\alpha(\overline{\Omega})$ is proved by Liebermann (See \cite[Theorem 2]{lieberman1986mixed}).
    \end{enumerate}
\end{Remark}

In this paper, we study the following partially overdetermined BVP in $\Omega\subset B_{K,\kappa}^{\text{int}}$:
\begin{align}\label{Xie}
\left\{
\begin{aligned}
  \overline{\triangle}u+(n+1)Ku=1 \quad&\text{in}\,\,\Omega,\\
  u=0 \quad&\text{on}\;\overline{\Sigma},\\
  \partial_\nu u=c \quad&\text{on}\,\,\overline{\Sigma},\\
  \partial_{\bar{N}}u=\kappa u+\widetilde{c} \quad&\text{on}\,\,T,
\end{aligned}
\right.
\end{align}
where $c$ is also a constant. We will prove the following Serrin-type theorem:

\begin{Theorem}
    Assume the partially overdetermined BVP \eqref{Xie} admits a weak solution $u\in W^{1,\infty}(\Omega)\cap W^{2,2}(\Omega)$ such that $u\leq 0$, then $c>0$ and $\Sigma$ must be part of an umbilical hypersurface with principal curvature $\frac{1}{(n+1)c}$ which intersects $S_{K,\kappa}$ at a constant contact angle $\theta$, characterized by $\cos\theta=-\frac{\widetilde{c}}{c}$.
\label{Theorem}
\end{Theorem}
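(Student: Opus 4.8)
The plan is to adapt the purely integral (Weinberger-type) approach of Guo--Xia to the inhomogeneous Robin setting, using the hypothesis $u\le 0$ to force the geometric rigidity. The key analytic object will be a $P$-function adapted to the space form $\mathbb{M}^{n+1}(K)$, something of the form
\begin{equation*}
P=|\overline{\nabla}u|^{2}+\tfrac{2}{n+1}u-(n+1)Ku^{2},
\end{equation*}
whose gradient and Laplacian I would compute using the Bochner formula together with the PDE $\overline{\triangle}u+(n+1)Ku=1$. First I would establish a Reilly-type integral identity on $\Omega$: integrating the Bochner formula and using $\overline{\triangle}u=1-(n+1)Ku$, one obtains
\begin{equation*}
\int_\Omega\Big(|\overline{\nabla}^2 u|^2-\tfrac{(\overline{\triangle}u)^2}{n+1}\Big)\,dx
+\int_\Omega \overline{\mathrm{Ric}}(\overline{\nabla}u,\overline{\nabla}u)\,dx
=\text{boundary terms on }\overline{\Sigma}\text{ and }T,
\end{equation*}
where $\overline{\mathrm{Ric}}=nK\,\bar g$ in a space form. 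The Cauchy--Schwarz inequality $|\overline{\nabla}^2 u|^2\ge (\overline{\triangle}u)^2/(n+1)$ makes the bulk term nonnegative, so the entire argument reduces to showing the boundary integrals have the correct sign and vanish precisely in the rigid configuration.

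The central new step, relative to Guo--Xia, is the handling of the boundary terms once the Robin condition is inhomogeneous. On $\overline{\Sigma}$ I would use $u=0$ and $\partial_\nu u=c$ to reduce tangential derivatives and express the boundary integrand in terms of the second fundamental form $h^\Sigma$ of $\Sigma$; the Dirichlet condition forces $\overline{\nabla}u=c\,\nu$ there, so the relevant boundary term becomes $c^2\int_\Sigma H^\Sigma\,dA$ up to constants, where $H^\Sigma$ is the mean curvature. On $T\subset S_{K,\kappa}$ I would substitute $\partial_{\bar N}u=\kappa u+\widetilde c$ and use the umbilicity of $S_{K,\kappa}$ (its second fundamental form is $\kappa\,\bar g$) to rewrite the $T$-integral. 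The inhomogeneous term $\widetilde c$ will produce an extra boundary contribution on $T$ and, crucially, a contribution along the interface $\Gamma$; controlling the coupling at $\Gamma$ between $\Sigma$ and $T$ is where the contact angle $\theta$ enters, and matching the geometric identities there is what yields the relation $\cos\theta=-\widetilde c/c$.

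After the identity is in hand, the rigidity follows by a now-standard equality-discussion: the sign hypothesis $u\le 0$ (together with $u=0$ on $\overline{\Sigma}$) combined with the structure of the boundary integrals forces every inequality used to be an equality. Equality in Cauchy--Schwarz forces $\overline{\nabla}^2 u=\tfrac{\overline{\triangle}u}{n+1}\,\bar g=\tfrac{1-(n+1)Ku}{n+1}\,\bar g$, i.e. $u$ has umbilic (geodesic-sphere-type) level sets, which in a space form pins down $u$ to the model solution and forces $\Sigma$ to be a piece of an umbilical hypersurface. Reading off the Hessian normalization against $\partial_\nu u=c$ on $\Sigma$ gives the principal curvature $\tfrac{1}{(n+1)c}$ (and in particular $c>0$), while the forced boundary equality on $T$ together with the $\Gamma$-matching gives the constant contact angle with $\cos\theta=-\widetilde c/c$.

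I expect the main obstacle to be the treatment of the corner/edge $\Gamma$ where $\Sigma$ meets $S_{K,\kappa}$: the solution is only $C^\alpha$ up to $\Gamma$ (smooth away from it), so all integration by parts and the Reilly-type identity must be justified by an approximation/cutoff argument near $\Gamma$, and the edge contributions must be shown either to vanish in the limit or to combine into precisely the contact-angle term. Making the boundary integral on $T$ have a definite sign in the presence of $\widetilde c$—so that the equality case still characterizes the geometry—will require carefully choosing the $P$-function's coefficients (for instance shifting $u$ or $P$ by a constant depending on $\widetilde c$) so that the homogeneous Robin structure exploited by Guo--Xia is recovered up to a controllable inhomogeneous remainder.
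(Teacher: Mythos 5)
Your overall strategy (a $P$-function plus an integral identity whose bulk term is the nonnegative Cauchy--Schwarz deficit) is the right family of techniques, but the route you sketch is missing two ideas the argument cannot do without, and contains one computational slip. First, the unweighted Reilly/Bochner identity you propose gives essentially $\int_\Omega \overline{\triangle}P_u\,dx=\int_{\partial\Omega}\partial_\nu P_u\,dA$, and nowhere in that identity does the hypothesis $u\le 0$ enter; yet that hypothesis is essential. The identity actually needed is the weighted one
\begin{equation*}
\int_\Omega(-Vu)\,\overline{\triangle}P_u\,dx=\frac{1}{2}\int_\Sigma\left(\bar g(\overline{\nabla}u,\overline{\nabla}u)-a\right)\left[V(u-\varphi)_\nu-V_\nu(u-\varphi)\right]dA,
\end{equation*}
where $V>0$ is the conformal factor of a conformal Killing field adapted to $S_{K,\kappa}$ (the weight is forced on you in a space form, where no genuine dilation field exists) and the factor $-Vu\ge 0$ is exactly where $u\le0$ is consumed; the free constant $a$ is then set to $c^2$ so that the $\Sigma$-integral vanishes identically rather than merely having a sign, forcing $\overline{\triangle}P_u\equiv 0$ and hence the Hessian rigidity. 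Second, your plan to ``control'' the $T$- and $\Gamma$-contributions is precisely where the difficulty lies and you give no mechanism. The paper eliminates them by constructing an auxiliary function $\varphi$ solving $\overline{\nabla}^2\varphi=\left(\frac{1}{n+1}-K\varphi\right)\bar g$ with the \emph{same} inhomogeneous Robin datum $\partial_{\bar N}\varphi=\kappa\varphi+\widetilde c$ on $S_{K,\kappa}$; then $V(u-\varphi)_{\bar N}-(u-\varphi)V_{\bar N}=0$ on $T$, and together with $\overline{\nabla}^2u(\bar N,Z)=0$ for $Z$ tangent to $T$ every $T$-integral cancels, while no $\Gamma$-terms appear because the identity is assembled from divergences of fields in $W^{1,\infty}\cap W^{2,2}$. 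Your alternative of reducing the $\Sigma$-term to $c^2\int_\Sigma H\,dA$ and invoking Minkowski-type formulas \emph{before} umbilicity is established would be circular: the Minkowski formula for a hypersurface with boundary presupposes the constant contact angle you are trying to prove.

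Note also that the angle formula is not extracted at $\Gamma$: once $\Sigma$ is shown to be umbilical the angle is automatically constant, and $\cos\theta=-\widetilde c/c$ follows from combining the Green-type identity $c\int_\Sigma V\,dA=\int_\Omega V\,dx-\widetilde c\int_T V\,dA$ with the Minkowski formula and the Killing-field identities $\int_\Sigma\bar g(\nu,Y)\,dA=-\mathcal{M}^{-1}\int_TV\,dA$ and $\int_\Sigma H\bar g(\nu,Y)\,dA=-\int_\Gamma\bar g(\mu,Y)\,ds$. Finally, your $P$-function has the wrong coefficients: with $\overline{\triangle}u+(n+1)Ku=1$, the choice making $\overline{\triangle}P=\left|\overline{\nabla}^2u-\frac{\overline{\triangle}u}{n+1}\bar g\right|^2\ge 0$ is $P=\frac12\bar g(\overline{\nabla}u,\overline{\nabla}u)-\frac{1}{n+1}u+\frac K2u^2$; with the signs and the coefficient $-(n+1)K$ you wrote, the subharmonicity on which the whole equality discussion rests fails.
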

\begin{Remark}
    \begin{enumerate}
      \item[\rm (i)] The case $\widetilde{c}=0$ is solved by Guo-Xia in \cite{guo2019partially,guo2022partially}, which corresponds to the orthogonal angle $\theta=\pi/2$.
      \item[\rm (ii)] Under the condition $u\leq 0$, we can further know that $u<0$ in $\Omega$ otherwise it will contradict the equation $\overline{\triangle}u+(n+1)Ku=1$. If $\widetilde{c}\leq 0$, the condition $u\leq 0$ is automatically satisfied (see \cite[Proposition 3.4]{guo2022partially}).
      \item[\rm (iii)] The case $\Omega\subset B_{K,\kappa}^{\rm ext}$ with unit outward normal $-\bar{N}$ along $T$ can also be discussed and have similar conclusion. However, the Robin boundary condition in \eqref{Xie} should become $\partial_{-\bar{N}}u=-\kappa u+\widetilde{c}$ on $T$ ($-\kappa\leq 0$).
    \end{enumerate}
\end{Remark}

\textbf{The idea of the proof of Theorem \ref{Theorem}:} Similar to the cases of half-space and half-ball in Euclidean \cite{jia2023characterization,jia2023rigidity}, with the classical $P$-function $P_u:=\frac{1}{2}\bar{g}(\overline{\nabla}u,\overline{\nabla}u)-\frac{1}{n+1}u+\frac{K}{2}u^2$ and an auxiliary function $\varphi$, we establish a integral identity for each umbilical hypersurface case:
\begin{align}\label{identity}
\begin{aligned}
&\int_\Omega-Vu\,\overline{\triangle}P_udx\\
=&\frac{1}{2}\int_\Sigma\left(\bar{g}(\overline{\nabla}u,\overline{\nabla}u)-a\right)\left[V(u-\varphi)_\nu-V_\nu(u-\varphi)\right]dA.
\end{aligned}
\end{align}
Here $V$ is a positive function which is given by a multiplier of the divergence of a conformal Killing vector field $X$ and $a$ is an arbitrary constant. See Section \ref{section2} for the detail expressions of $X$ and $V$. The auxiliary function $\varphi$ is a function in $C^{\infty}(\mathbb{M}^{n+1}(K))$ that satisfies:
\begin{align}\label{resolvent}
\left\{
\begin{aligned}
  \overline{\nabla}^2\varphi=\left(\frac{1}{n+1}-K\varphi\right)\bar{g}\quad&\text{on}\,\,\mathbb{M}^{n+1}(K)\\
  \partial_{\bar{N}}\varphi=\kappa \varphi+\widetilde{c} \quad&\text{on}\,\,S_{K,\kappa}.\\
\end{aligned}
\right.
\end{align}
The advantage of introducing the auxiliary function $\varphi$ is that some integrals on the boundary $T$ will vanish in calculations. The hypersurface $\Sigma$ being umbilical follows immediately if we take $a=c^2=\bar{g}(\overline{\nabla}u,\overline{\nabla}u)|_\Sigma$ in \eqref{identity} and use the subharmonicity of $P_u$. The contact angle characterization can be obtained by using some integral formulas and a proposition about the mean curvature.

\textbf{Organization of the paper:} In Section \ref{section2}, we collect the conformal Killing vector fields $X$ and the Killing vector fields $Y$ we shall use in each umbilical hypersurface case and their properties. In Section \ref{section3}, we find an auxiliary function $\varphi$ to the mixed BVP \eqref{BVP} for each case and give some integral formulas and a proposition about the mean curvature. In Section \ref{section4}, we prove the crucial integral identities \eqref{identity} and then Theorem \ref{Theorem}.

\section{Preliminaries}\label{section2}
    Assume $B$ be a domain in $\mathbb{M}^{n+1}(K)$. Let $\Omega$ be a bounded domain in $B$ with piece-wise smooth boundary $\partial\Omega=\overline{\Sigma}\cup T$, where $\Sigma\subset B$ is a smooth open hypersurface and $T=\partial\Omega\backslash\overline{\Sigma}\subset\partial B$ meets $\overline{\Sigma}$ at a common $(n-1)$-dimensional submanifold $\Gamma\subset\partial B$.

    We denote by $\overline{\nabla}$, $\overline{\triangle}$, $\overline{\nabla}^2$, $\overline{\rm{div}}$ the Riemannian connection, the Laplacian, the Hessian and the divergence on $(\mathbb{M}^{n+1}(K),\bar{g})$ respectively. Let $\nabla^\Sigma, \rm{div}^\Sigma$ be the Riemannian connection and the divergence on the hypersurface $\Sigma$, $h$ the second fundamental form of $\Sigma$ and $H$ the mean curvature of $\Sigma$.

    We denote by $\nu$ and $\bar{N}$ the unit outward normal to $\Sigma$ and $T$ (with respect to $\Omega$) respectively. Let $\mu$ be the unit outward co-normal to $\Gamma\subset\overline{\Sigma}$, $\bar{\nu}$ be the unit outward co-normal to $\Gamma\subset \overline{T}$. Then along $\Gamma$, $\{\nu,\mu\}$ and $\{\bar{\nu},\bar{N}\}$ have the same orientation in the normal bundle of $\Gamma\subset B$ (See Figure \ref{domain}). In particular, if $\Sigma$ meets $\partial B$ at a constant contact angle $\theta$, then along $\Gamma$,
\begin{equation}\label{vectors4}
    \mu=\sin\theta\bar{N}+\cos\theta\bar{\nu},\quad \nu=-\cos\theta\bar{N}+\sin\theta\bar{\nu}.
\end{equation}

\begin{center}
\begin{figure}[h]
\vspace{-10pt}
\begin{tikzpicture}[scale=2.2]
    \draw (0,0) to [out=-45,in=160] (0.7,-0.75) to [out=-20,in=180] (2.3,-1) to [out=0,in=-160] (3,-0.85);
    \draw[cyan,thick,fill=gray!25] (0.7,-0.75) to [out=-20,in=180] (2.3,-1);
    \draw[red,thick,fill=gray!25] (0.7,-0.75) to [out=60,in=110] (2.3,-1);
    \draw[->] (2.3,-1) to (2.3,-1.4);
    \draw[->] (2.3,-1) to (2.7,-1);
    \draw[->] (2.3,-1) to (2.4368,-1.3795);
    \draw[->] (2.3,-1) to (2.6759,-0.8632);
    \node at (1.5,0.15) {$B$};\node[left] at (0,0) {$\partial B$};
    \node[left] at (2.3,-1.4) {$\bar{N}$};
    \node[below] at (2.7,-1) {$\bar{\nu}$};
    \node[right] at (2.4368,-1.3795) {$\mu$};
    \node[above] at (2.6759,-0.8632) {$\nu$};
    \node at (1.5,-0.7) {$\Omega$};\node[red] at (1.65,-0.35) {$\Sigma$};\node[cyan] at (1.3,-1.07) {$T$};
    \draw (2.4,-1) arc [radius=0.1,start angle=0,end angle=-70];\node[above] at (2.45,-1.25) {$\theta$};
\end{tikzpicture}
\caption{}
\label{domain}
\vspace{-15pt}
\end{figure}
\end{center}

    Next, we will give the visualizations of all umbilical hypersurfaces in $\mathbb{M}^{n+1}(K)$ ($K=\pm 1$). Each umbilical hypersurface case is associated with a conformal Killing vector field $X$ and a Killing vector field $Y$. The divergence of $X$ will produce the nonnegative function $V$ in the integral identity \eqref{identity}, and $Y$ is used to the formulation of some integral formulas in Section \ref{section3}.

    There are two models for the hyperbolic space $\mathbb{H}^{n+1}$, the Poincar\'{e} ball model $$\mathbb{B}^{n+1}=\left\{x\in\mathbb{R}^{n+1}\,\big|\,|x|<1\right\},\quad \bar{g}=\frac{4}{(1-|x|^2)^2}
\delta,$$ and the upper half-space model $$\mathbb{R}_+^{n+1}=\left\{x\in\mathbb{R}^{n+1}\,\big|\,x_{n+1}>0\right\}, \quad\bar{g}=\frac{1}{x_{n+1}^2}
\delta.$$ For the spherical space $\mathbb{S}^{n+1}$, we use the stereographic projection model $$\left(\mathbb{R}^{n+1},\,\bar{g}=\frac{4}{(1+|x|^2)^2}\delta\right),$$ which represent $\mathbb{S}^{n+1}\backslash\{S\}$, the unit sphere without the south pole. Let $E_1,\cdots,E_{n+1}$ form the coordinate frame in the Euclidean metric.

    \cite{guo2022partially} have given the information about geodesic spheres, horospheres, equidistant hypersurfaces in $\mathbb{H}^{n+1}$ and geodesic spheres in $\mathbb{S}^{n+1}$. \cite{chen2022some} have given the information about totally geodesic hyperplanes in $\mathbb{H}^{n+1}$. For the convenience of readers, we list them as follows.

  \textbf{Case 1:} If $S_{K,\kappa}$ is a geodesic sphere of radius $R\in(0,\infty)$ in $\mathbb{H}^{n+1}$, then $\kappa=\coth R\in(1,\infty)$. Let $B_{K,\kappa}^{\rm int}$ denote the geodesic ball enclosed by $S_{K,\kappa}$. Using the Poincar\'{e} ball model, up to an hyperbolic isometry, we have
$$B_{K,\kappa}^{\rm int}=\left\{x \in \mathbb{B}^{n+1}\, \Bigg| \, |x|< R_{\mathbb{R}}:=\sqrt{\frac{\cosh R -1}{\cosh R +1}}\right\}.$$ Moreover, we let $B_{K,\kappa}^{\rm{int},+}=\left\{x\in B_{K,\kappa}^{\rm int}\,\Big|\,x_{n+1}>0\right\}$ be the corresponding geodesic half-ball (See Figure \ref{geodesic}).
\begin{center}
\begin{figure}[h]
\begin{tikzpicture}[scale=1.3]
    \draw[fill=gray!25] (1.6,0) arc [radius=1.6,start angle=0, end angle=180];
    \node[below] at (2.57,0) {\footnotesize $1$};
    \draw[dashed,thick] (0,0) circle [radius=2.5];\draw (1.6,0) arc [radius=1.6,start angle=0,end angle=-180];
    \draw (-2.5,0) to (2.5,0);\node[below] at (0.1,0) {$o$};\draw[->] (0,0) to (0,2.6);\node[left] at (0,2.6) {\footnotesize{$x_{n+1}$}};
    \draw[dashed,<-] (-1.6,-0.1) to (-1,-0.1);\draw[dashed,->] (-0.6,-0.1) to (0,-0.1);\node at (-0.8,-0.14) {\tiny{$R_{\mathbb{R}}$}};
    \draw[thick,red] (1.3856,0.8) to [out=-150,in=-60] (-0.8,1.3856);\draw[thick,cyan] (1.3856,0.8) arc [radius=1.6,start angle=30,end angle=120];
    \node[red] at (-0.65,0.9) {\footnotesize$\Sigma$};\node[cyan] at (0.6,1.64) {\footnotesize$T$};\node at (0.3,1) {\footnotesize$\Omega$};
    \node at (0.8,0.3) {\footnotesize$B_{K,\kappa}^{\text{int},+}$};\node at (1.6,1.5) {\footnotesize$B_{K,\kappa}^{\text{ext}}$};
    \node at (0,-1.8) {\footnotesize $S_{K,\kappa}$};
\end{tikzpicture}
\caption{$S_{K,\kappa}$ is a geodesic sphere with $\kappa=\coth R>1$, and the shaded area is $B_{K,\kappa}^{\rm{int},+}$.}
\label{geodesic}
\vspace{-15pt}
\end{figure}
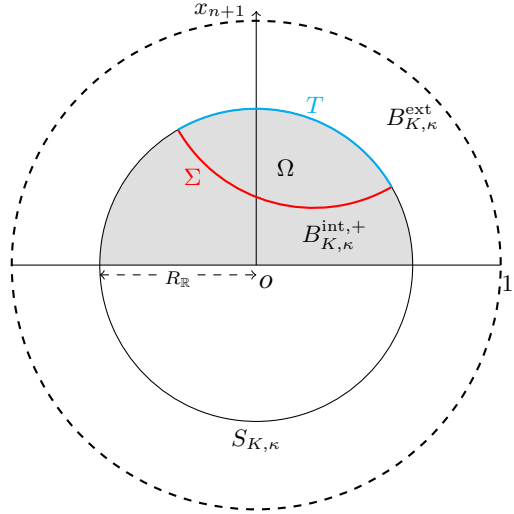
\end{center}

    In this case, set
\begin{equation}
    X_1:=\frac{2}{1-R_{\mathbb{R}}^2}\left[x_{n+1}x-\frac{|x|^2+R_{\mathbb{R}}^2}{2}E_{n+1}\right],~V_1:=\frac{2x_{n+1}}{1-|x|^2},~Y_1:=\frac{1+|x|^2}{2}E_{n+1}-x_{n+1}x.
\label{XVY1}
\end{equation}

    \textbf{Case 2:} If $S_{K,\kappa}$ is an equidistant hypersurface in $\mathbb{H}^{n+1}$, then $\kappa\in(0,1]$. We remark that $\kappa=1$ when $S_{K,\kappa}$ is a horosphere. Using the upper half-space model, we have up to an hyperbolic isometry,
$$S_{K,\kappa}=L_\alpha=\left\{x\in\mathbb{R}_+^{n+1}\,|\,x_1\tan\alpha+x_{n+1}=1\right\}$$
with $\kappa=\cos\alpha\in(0,1]$ and
$$B_{K,\kappa}^{\rm int}=\left\{x\in\mathbb{R}_+^{n+1}\,|\,x_1\tan\alpha+x_{n+1}>1\right\},$$
where $\alpha\in[0,\frac{\pi}{2})$ (See Figure \ref{equidistant}).

    In this case, set
$$X_2:=x-E_{n+1}, \quad V_2:=\frac{1}{x_{n+1}},\quad Y_2:=x.$$
\begin{center}
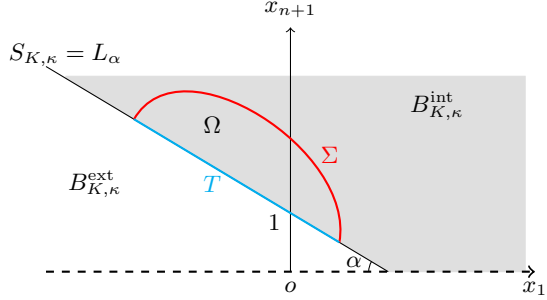
\begin{figure}[h]
\vspace{-10pt}
\begin{tikzpicture}[scale=1.3]
    \draw[gray!25,fill=gray!25] (-2.3,1.98)--(1,0)--(2.4,0)--(2.4,2)--(-2.3,2);
    \draw[dashed,thick,->] (-2.5,0)--(2.5,0);
    \draw[->] (0,0)--(0,2.5);
    \draw (1,0)--(-2.5,2.1);
    \node[left] at (0,0.5) {\footnotesize$1$};\node[below] at (2.5,0) {\footnotesize$x_1$};\node[above] at (0,2.5) {\footnotesize$x_{n+1}$};
    \node[below] at (0,0) {\footnotesize$o$};\node at (-2,0.9) {\footnotesize$B_{K,\kappa}^{\rm ext}$};
    \node at (1.5,1.7) {\footnotesize$B_{K,\kappa}^{\rm int}$};\node at (0.65,0.1) {\footnotesize$\alpha$};
    \draw (0.8,0) arc [radius=0.2,start angle=180,end angle=149.0362];
    \draw[thick,red] (-1.6,1.56) [out=60,in=80] to (0.5,0.3);
    \draw[thick,cyan] (-1.6,1.56)--(0.5,0.3);
    \node at (-0.8,1.5) {\footnotesize$\Omega$};
    \node at (-2.3,2.22) {\footnotesize$S_{K,\kappa}=L_\alpha$};
    \node[cyan] at (-0.8,0.9) {\footnotesize$T$};
    \node[red] at (0.4,1.2) {\footnotesize$\Sigma$};
\end{tikzpicture}
\caption{$S_{K,\kappa}$ is an equidistant hypersurface with $\kappa=\cos\alpha\in(0,1]$, and the shaded area is $B_{K,\kappa}^{\rm int}$.}
\label{equidistant}
\vspace{-15pt}
\end{figure}
\end{center}

\begin{center}
\begin{figure}[h]
\begin{tikzpicture}[scale=1.3]
    \draw[white,fill=gray!25] (2.5,0) arc [radius=2.5,start angle=0,end angle=180];
    \draw[dashed,thick] (0,0) circle [radius=2.5];
    \draw (-2.5,0) to (2.5,0);\node[below] at (0,0) {$o$};\draw[->] (0,0) to (0,2.6);\node[left] at (0,2.6) {\footnotesize{$x_{n+1}$}};
    \draw[thick,red] (-2,0) [out=78,in=105] to (1.6,0);
    \draw[thick,cyan] (-2,0)--(1.6,0);
    \node[below] at (2.2,0) {\footnotesize$S_{K,\kappa}$};
    \node[below,cyan] at (-1.25,0) {\footnotesize$T$};
    \node at (-1,0.6) {\footnotesize$\Omega$};
    \node[red] at (1,0.9) {\footnotesize$\Sigma$};
    \node at (1,1.7) {\footnotesize$B_{K,\kappa}^{\rm int}$};
    \node at (0,-1.3) {\footnotesize$B_{K,\kappa}^{\rm ext}$};
\end{tikzpicture}
\caption{$S_{K,\kappa}$ is a totally geodesic hyperplane with $\kappa=0$, and the shaded area is $B_{K,\kappa}^{\rm int}$.}
\label{hyperplane}
\vspace{-15pt}
\end{figure}
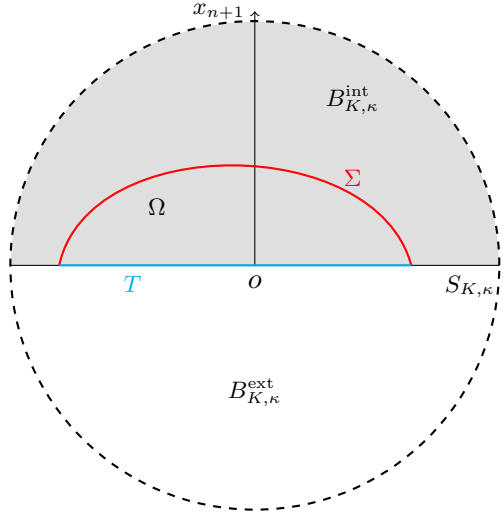
\end{center}

    \textbf{Case 3:} If $S_{K,\kappa}$ is a totally geodesic hyperplane in $\mathbb{H}^{n+1}$, then $\kappa=0$. Using the Poincar\'{e} ball model, we have up to an hyperbolic isometry,
$$S_{K,\kappa}=\left\{x\in\mathbb{B}^{n+1}\,|\,x_{n+1}=0\right\},$$
$$B_{K,\kappa}^{\rm int}=\left\{x\in\mathbb{B}^{n+1}\,|\,x_{n+1}>0\right\}.$$
(See Figure \ref{hyperplane}.)

    In this case, set
\begin{equation}
X_3:=x,\quad V_3:=\frac{1+|x|^2}{1-|x|^2},\quad Y_3:=\frac{1+|x|^2}{2}E_{n+1}-x_{n+1}x.
\label{XVY3}
\end{equation}
    \textbf{Case 4:} If $S_{K,\kappa}$ is a geodesic sphere of radius $R\in (0,\frac{\pi}{2}]$ in $\mathbb{S}^{n+1}$, then $\kappa=\cot R\in [0,\infty)$. In the stereographic projection model, we have
$$S_{K,\kappa}=\left\{x \in \mathbb{B}^{n+1}\, \Bigg| \, |x|= R_{\mathbb{R}}:=\sqrt{\frac{1-\cos R}{1+\cos R}}\right\}.$$ We remark that $S_{K,\kappa}$ is actually a totally geodesic hyperplane ($\kappa=0$) when $R=\frac{\pi}{2}.$ Let $B_{K,\kappa}^{\rm int}$ be the geodesic ball enclosed by $S_{K,\kappa}$, and let $B_{K,\kappa}^{\rm{int},+}$ be the corresponding geodesic half-ball given by
$$B_{K,\kappa}^{\rm{int},+}=\left\{x\in B_{K,\kappa}^{\rm{int}}\,\Big|\,x_{n+1}>0\right\}.$$
(See Figure \ref{SS}.)
\begin{center}
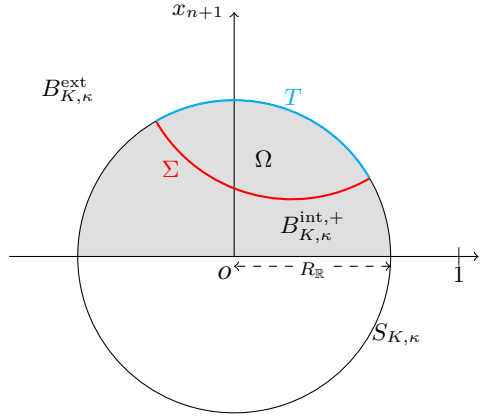
\begin{figure}[h]
\begin{tikzpicture}[scale=1.3]
    \draw[fill=gray!25] (1.6,0) arc [radius=1.6,start angle=0,end angle=180];
    \draw (1.6,0) arc [radius=1.6,start angle=0,end angle=-180];
    \draw[->] (-2.3,0) to (2.5,0);\node[below] at (-0.1,0) {$o$};\draw[->] (0,0) to (0,2.5);\node[left] at (0,2.5) {\footnotesize{$x_{n+1}$}};
    \draw[dashed,<-] (1.6,-0.1) to (1,-0.1);\draw[dashed,->] (0.6,-0.1) to (0,-0.1);\node at (0.8,-0.14) {\tiny{$R_{\mathbb{R}}$}};
    \node at (2.3,0) {\tiny$|$};
    \node[below] at (2.3,0) {\footnotesize$1$};\node at (-1.7,1.7) {\footnotesize$B_{K,\kappa}^{\rm ext}$};
    \draw[thick,red] (1.3856,0.8) to [out=-150,in=-60] (-0.8,1.3856);\draw[thick,cyan] (1.3856,0.8) arc [radius=1.6,start angle=30,end angle=120];
    \node[red] at (-0.65,0.9) {\footnotesize$\Sigma$};\node[cyan] at (0.6,1.63) {\footnotesize$T$};\node at (0.3,1) {\footnotesize$\Omega$};
    \node at (0.8,0.3) {\footnotesize$B_{K,\kappa}^{\rm{int},+}$};
    \node[right] at (1.3,-0.8) {\footnotesize $S_{K,\kappa}$};
\end{tikzpicture}
\caption{$S_{K,\kappa}$ is a geodesic sphere with $\kappa=\cot R\in[0,\infty)$, and the shaded area is $B_{K,\kappa}^{\rm{int},+}$.}
\label{SS}
\vspace{-15pt}
\end{figure}
\end{center}

    In this case, set
\begin{equation*}
     X_4:=\frac{2}{1+R_{\mathbb{R}}^2}\left[x_{n+1}x-\frac{|x|^2+R_{\mathbb{R}}^2}{2}E_{n+1}\right],~V_4:=\frac{2x_{n+1}}{1+|x|^2},~Y_4:=\frac{1-|x|^2}{2}E_{n+1}+x_{n+1}x.
\end{equation*}

    Combining \cite{wang2019uniqueness} and \cite[Proposition 2.2-2.3]{guo2022stable}, we have the following proposition.
\begin{Proposition}
In each case above, the vector fields $X_k$ and $Y_k$ ($k=1,2,3,4$) have the following properties:
\begin{enumerate}
  \item[\rm(1)] $X_k$ is a conformal Killing vector field with $\mathcal{L}_{X_k}\bar{g}=V_k\bar{g}$, namely
$$\frac{1}{2}\left(\bar{g}\big(\overline{\nabla}_{e_i}X_k,e_j\big)+\bar{g}\big(\overline{\nabla}_{e_j}X_k,e_i\big)\right)=V_k\bar{g}_{ij},$$
for any orthonormal frams $\{e_i\}_{i=1}^{n+1}$. In particular, $\overline{\rm div}\,X_k=(n+1)V_k$.
  \item[\rm(2)] $X_k|_{S_{K,\kappa}}$ is a tangential vector field on $S_{K,\kappa}$, namely
\begin{equation}
\bar{g}(X_k,\bar{N})=0 \,\,\text{on} \,\, S_{K,\kappa}.
\label{XorthoN}
\end{equation}
  \item[\rm(3)] $Y_k$ is a Killing vector field, i.e. $\mathcal{L}_{Y_k}\bar{g}=0$. In particular, $\overline{\rm div}\,Y_k=0$.
\end{enumerate}
\label{XYproperty}
\end{Proposition}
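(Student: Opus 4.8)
The plan is to exploit the one structural feature common to all four cases: each model metric is conformally flat, $\bar g=e^{2\phi}\delta$, with $e^{2\phi}=\tfrac{4}{(1-|x|^2)^2}$ (Poincar\'e ball), $e^{2\phi}=\tfrac{1}{x_{n+1}^2}$ (upper half-space), or $e^{2\phi}=\tfrac{4}{(1+|x|^2)^2}$ (stereographic model), where $\delta$ and $\langle\cdot,\cdot\rangle$ denote the Euclidean metric and inner product and $\nabla$, $\mathrm{div}_\delta$ the Euclidean gradient and divergence. The key point is that the conformal Killing equation is invariant inside a conformal class, so all three assertions reduce to elementary Euclidean computations. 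The basic identity I would use is the transformation rule
\begin{equation}\label{confrule}
\tfrac12\mathcal{L}_X\bar g=e^{2\phi}\Big(\tfrac12\mathcal{L}_X\delta+\langle X,\nabla\phi\rangle\,\delta\Big),
\end{equation}
valid for any vector field $X$, obtained from $\mathcal{L}_X(e^{2\phi}\delta)=X(e^{2\phi})\,\delta+e^{2\phi}\mathcal{L}_X\delta$ and $X(e^{2\phi})=2e^{2\phi}\langle X,\nabla\phi\rangle$.

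For part (1), I would first check that each $X_k$ is a conformal Killing field of the flat metric $\delta$. This is immediate from inspection: every $X_k$ is a linear combination of the standard generators of the Euclidean conformal group, namely translations ($E_{n+1}$), dilations ($x$), and special conformal transformations ($2\langle b,x\rangle x-|x|^2b$ with $b=E_{n+1}$). For such a field $\tfrac12\mathcal{L}_X\delta=\tfrac{1}{n+1}(\mathrm{div}_\delta X)\,\delta$, so \eqref{confrule} gives
\begin{equation}\label{Vformula}
\tfrac12\mathcal{L}_{X_k}\bar g=V_k\,\bar g,\qquad V_k=\tfrac{1}{n+1}\mathrm{div}_\delta X_k+\langle X_k,\nabla\phi\rangle,
\end{equation}
which is exactly the symmetrized statement in (1), and tracing gives $\overline{\mathrm{div}}\,X_k=(n+1)V_k$. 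It then remains to verify case by case that the right-hand side of \eqref{Vformula} collapses to the stated $V_k$; for instance in the half-space model one has $\mathrm{div}_\delta X_2=n+1$, $\nabla\phi=-x_{n+1}^{-1}E_{n+1}$, and $\langle X_2,\nabla\phi\rangle=-1+x_{n+1}^{-1}$, so $V_2=1+(-1+x_{n+1}^{-1})=x_{n+1}^{-1}$, as claimed, and the other three cases are analogous.

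For part (2), I would use that a conformal change preserves orthogonality, so $\bar N$ is parallel to the Euclidean unit normal of $S_{K,\kappa}$ and the condition $\bar g(X_k,\bar N)=0$ is equivalent to $\langle X_k,n_\delta\rangle=0$, where $n_\delta$ is the Euclidean normal. Each $S_{K,\kappa}$ is a Euclidean sphere or hyperplane in its model, so this is a one-line check: on $\{|x|=R_{\mathbb R}\}$ one computes $\langle X_1,x\rangle=\tfrac{2}{1-R_{\mathbb R}^2}\,x_{n+1}\cdot\tfrac{|x|^2-R_{\mathbb R}^2}{2}=0$; on the hyperplane $L_\alpha$ one gets $\langle X_2,\tan\alpha\,E_1+E_{n+1}\rangle=x_1\tan\alpha+x_{n+1}-1=0$; and on $\{x_{n+1}=0\}$ one has $\langle X_3,E_{n+1}\rangle=x_{n+1}=0$.

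For part (3), I would again apply \eqref{confrule}: since each $Y_k$ is also a Euclidean conformal Killing field, $\tfrac12\mathcal{L}_{Y_k}\bar g=\big(\tfrac{1}{n+1}\mathrm{div}_\delta Y_k+\langle Y_k,\nabla\phi\rangle\big)\bar g$, so $Y_k$ is Killing precisely when these two Euclidean terms cancel. This cancellation is the substantive computation; e.g.\ in the ball model $\mathrm{div}_\delta Y_3=-(n+1)x_{n+1}$ while $\langle Y_3,\nabla\phi\rangle=x_{n+1}$, giving $\tfrac12\mathcal{L}_{Y_3}\bar g=0$, and similarly for $Y_1,Y_2,Y_4$. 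The conceptual content is entirely contained in \eqref{confrule} and the conformal invariance of angles; the only real labor, and the main place to be careful, is the bookkeeping across the three different models and the constants $R_{\mathbb R}$, making sure each computed $V_k$ and each Killing cancellation comes out exactly as stated.
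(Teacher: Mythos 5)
Your proposal is correct, but it takes a different route from the paper: the paper does not prove Proposition \ref{XYproperty} at all, it simply imports the statements from \cite{wang2019uniqueness} and \cite[Proposition 2.2--2.3]{guo2022stable}, where the verifications are carried out for each model separately. What you supply is a self-contained and unified argument: the single transformation law $\tfrac12\mathcal{L}_X(e^{2\phi}\delta)=e^{2\phi}\bigl(\tfrac12\mathcal{L}_X\delta+\langle X,\nabla\phi\rangle\,\delta\bigr)$ reduces all three assertions, in all four cases, to Euclidean linear algebra, since every $X_k$ and $Y_k$ is a combination of the generators $E_{n+1}$, $x$, and $2x_{n+1}x-|x|^2E_{n+1}$ of the flat conformal algebra. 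I checked the computations you display and the ones you defer as ``analogous'': e.g.\ for $X_1$ one gets $\tfrac{1}{n+1}\mathrm{div}_\delta X_1=\tfrac{2x_{n+1}}{1-R_{\mathbb R}^2}$ and $\langle X_1,\nabla\phi\rangle=\tfrac{2x_{n+1}(|x|^2-R_{\mathbb R}^2)}{(1-R_{\mathbb R}^2)(1-|x|^2)}$, whose sum is indeed $\tfrac{2x_{n+1}}{1-|x|^2}=V_1$, and for $Y_4$ the sign flip $\nabla\phi=-\tfrac{2x}{1+|x|^2}$ in the stereographic model still produces the cancellation $x_{n+1}-x_{n+1}=0$. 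Two small points worth making explicit if you write this up: first, the symmetrized expression $\tfrac12\bigl(\bar g(\overline\nabla_{e_i}X,e_j)+\bar g(\overline\nabla_{e_j}X,e_i)\bigr)$ in the statement is exactly $\tfrac12(\mathcal{L}_X\bar g)(e_i,e_j)$, so your Lie-derivative formulation is equivalent to the stated one; second, for part (2) you should note that $\bar N$ is a \emph{positive} multiple of the Euclidean normal (conformal factors are positive), which is what makes $\bar g(X_k,\bar N)=0$ equivalent to the Euclidean orthogonality you verify. What your approach buys is transparency and independence from the cited references; what the paper's citation buys is brevity. Your argument would serve well as an appendix or a remark replacing the citation.
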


    Combining \cite{wang2019uniqueness} and \cite[Proposition 2.3]{guo2022partially}, we have the following proposition.
\begin{Proposition}
In each case above, $V_k$ ($k=1,2,3,4$) satisfies the following properties:
\begin{align}
  \overline{\nabla}^2V_k&=-KV_k\bar{g},\label{V1}\\
  \partial_{\bar{N}}V_k&=\kappa V_k \,\,\text{on}\,\, S_{K,\kappa}.\label{V2}
\end{align}
\end{Proposition}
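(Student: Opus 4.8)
The plan is to prove the two assertions \eqref{V1} and \eqref{V2} separately, in each instance reducing to a computation in the relevant conformally flat model $\bar g=e^{2\phi}\delta$: here $e^{\phi}=\tfrac{2}{1-|x|^2}$ in the Poincar\'e ball, $e^{\phi}=x_{n+1}^{-1}$ in the upper half-space, and $e^{\phi}=\tfrac{2}{1+|x|^2}$ in the stereographic model of $\mathbb S^{n+1}$. Throughout I would use that a $\bar g$-unit vector is $e^{-\phi}$ times a Euclidean-unit vector, and that for any function $f$ one has $\partial_{\bar N}f=\bar g(\overline\nabla f,\bar N)=\bar N(f)$ as a directional derivative.

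For the Hessian identity \eqref{V1} the cleanest route is conceptual: by Proposition~\ref{XYproperty}(1), $V_k$ is the conformal factor of the conformal Killing field $X_k$, i.e. $\overline\nabla_i(X_k)_j+\overline\nabla_j(X_k)_i=2V_k\bar g_{ij}$, and \eqref{V1} is precisely the second integrability condition satisfied by the conformal factor of any conformal Killing field in a space of constant curvature $K$. One obtains it by differentiating this equation once more, permuting the indices $(i,j,k)$, and applying the Ricci commutation identity together with $\bar R_{abcd}=K(\bar g_{ac}\bar g_{bd}-\bar g_{ad}\bar g_{bc})$ and $\overline{\mathrm{Ric}}=nK\bar g$. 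For a fully self-contained argument I would instead verify \eqref{V1} directly in each model, using the conformal Christoffel symbols $\bar\Gamma^l_{ij}=\delta^l_i\partial_j\phi+\delta^l_j\partial_i\phi-\delta_{ij}\partial_l\phi$, so that
\[
\overline\nabla^2_{ij}V_k=\partial_i\partial_j V_k-\partial_i\phi\,\partial_j V_k-\partial_j\phi\,\partial_i V_k+\delta_{ij}\sum_l\partial_l\phi\,\partial_l V_k,
\]
and then checking that this equals $-KV_k e^{2\phi}\delta_{ij}=-KV_k\bar g_{ij}$. As a representative case, the equidistant model ($\phi=-\log x_{n+1}$, $V_2=x_{n+1}^{-1}$, $K=-1$) yields $\overline\nabla^2_{ij}V_2=x_{n+1}^{-3}\delta_{ij}=V_2\bar g_{ij}$, in agreement with $-KV_2\bar g_{ij}$; the three remaining cases reduce to analogous polynomial identities in $x$.

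For the boundary condition \eqref{V2} there is no conformal shortcut, so I would compute directly. Writing $\bar N=e^{-\phi}\nu_E$ with $\nu_E$ the Euclidean unit normal to $S_{K,\kappa}$ pointing out of $B_{K,\kappa}^{\rm int}$, one has $\partial_{\bar N}V_k=e^{-\phi}\,\nu_E(V_k)$, and the plan is to read off $\nu_E$ from the defining equation of $S_{K,\kappa}$, take the Euclidean directional derivative of $V_k$, and then use that defining equation to simplify the restriction to $S_{K,\kappa}$ down to $\kappa V_k$. In the equidistant case, for example, $\nu_E=-(\sin\alpha,0,\dots,0,\cos\alpha)$ and $e^{-\phi}=x_{n+1}$ give $\partial_{\bar N}V_2=\cos\alpha\,x_{n+1}^{-1}=\kappa V_2$ since $\kappa=\cos\alpha$; the geodesic-sphere and hyperplane cases follow after expressing $\bar N$ in the ball and stereographic coordinates and using $|x|=R_{\mathbb R}$ (resp. $x_{n+1}=0$) on $S_{K,\kappa}$.

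The main obstacle is organizational rather than conceptual: no single formula covers the four hypersurfaces, so both \eqref{V1} and \eqref{V2} must be checked model by model, and the two places where care is needed are (i) orienting $\bar N$ so that the sign in \eqref{V2} comes out as $+\kappa V_k$ rather than $-\kappa V_k$, and (ii) keeping the conformal-factor bookkeeping consistent when converting between Euclidean and $\bar g$-quantities. The conformal Killing viewpoint is the one genuine simplification, as it collapses the four separate Hessian computations in \eqref{V1} into a single curvature identity.
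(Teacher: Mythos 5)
Your proposal is mathematically sound, but note that the paper does not actually prove this proposition: it is imported wholesale by citation from \cite{wang2019uniqueness} and \cite[Proposition 2.3]{guo2022partially}, so you are supplying a verification where the paper supplies none. Your two routes for \eqref{V1} are both valid. The conceptual one --- that the conformal factor of a conformal Killing field on a space form satisfies $\overline{\nabla}^2V_k=-KV_k\bar{g}$ --- is a genuine unification that collapses four computations into one curvature identity; just be aware that extracting it from $\mathcal{L}_{X_k}\bar{g}=V_k\bar{g}$ requires the full second-order integrability identity for conformal Killing fields (three covariant derivatives of $X_k$, a cyclic permutation, and the constant-curvature form of $\bar{R}$), which is standard but not a one-line consequence of Proposition \ref{XYproperty}(1) as stated. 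Your fallback of checking the conformal-Hessian formula $\overline{\nabla}^2_{ij}V_k=\partial_i\partial_jV_k-\partial_i\phi\,\partial_jV_k-\partial_j\phi\,\partial_iV_k+\delta_{ij}\sum_l\partial_l\phi\,\partial_lV_k$ model by model is exactly what the cited sources do, and your representative computation in the equidistant model ($\overline{\nabla}^2V_2=x_{n+1}^{-3}\delta=V_2\bar{g}$) is correct. For \eqref{V2} the model-by-model computation is unavoidable and your sample cases check out (e.g.\ in Case 1 one needs $\tfrac{1+R_{\mathbb{R}}^2}{2R_{\mathbb{R}}}=\coth R$, which follows from $R_{\mathbb{R}}=\sinh R/(\cosh R+1)$); your two flagged care points, the orientation of $\bar{N}$ and the conformal-factor bookkeeping, are indeed where sign errors would creep in, particularly in Case 2 where $B_{K,\kappa}^{\rm int}$ lies above $L_\alpha$ so $\nu_E=-(\sin\alpha,0,\dots,0,\cos\alpha)$. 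In short: correct, self-contained, and more informative than the paper's citation; the only caveat is to either carry out the conformal-Killing integrability derivation in full or fall back on the direct verification.
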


\section{Existence of auxiliary functions and some integral formulas}\label{section3}
    From this section on, let $\Omega$ be a bounded domain in $B_{K,\kappa}^{\text{int}}$ with piece-wise smooth boundary $\partial\Omega=\overline{\Sigma}\cup T$, where $\Sigma\subset B_{K,\kappa}^{\text{int}}$ is a smooth open hypersurface and $T=\partial\Omega\backslash\overline{\Sigma}\subset S_{K,\kappa}$ meets $\overline{\Sigma}$ at a common $(n-1)$-dimensional submanifold $\Gamma\subset S_{K,\kappa}$. We emphasize here again that: in the following sections, we use $\Omega\subset B_{K,\kappa}^{\rm{int}}$ to indicate that $\Omega\subset B_{K,\kappa}^{\rm{int},+}$ in the case that $S_{K,\kappa}$ is a geodesic sphere.

\subsection{Existence of auxiliary functions}\

    In this subsection, we will find an auxiliary function $\varphi\in C^{\infty}(\mathbb{M}^{n+1}(K))$ for each umbilical hypersurface case which satisfies
\begin{align}\label{resolventfunction}
\left\{
\begin{aligned}
  \overline{\nabla}^2\varphi=\left(\frac{1}{n+1}-K\varphi\right)\bar{g}\quad&\text{on}\,\,\mathbb{M}^{n+1}(K),\\
  \partial_{\bar{N}}\varphi=\kappa \varphi+\widetilde{c} \quad&\text{on}\,\,S_{K,\kappa}.\\
\end{aligned}
\right.
\end{align}

    We first recall the classical $P$-function. For every smooth function $u$, there will be a $P$-function corresponding to it, namely
$$P_u:=\frac{1}{2}\bar{g}\left(\overline{\nabla}u,\overline{\nabla}u\right)-\frac{1}{n+1}u+\frac{K}{2}u^2.$$
If $u$ satisfies $\overline{\triangle}u+(n+1)Ku=1$, from Bochner-Weitzenb\"{o}ck formula we obtain
$$\overline{\triangle}P_u=\left|\overline{\nabla}^2u-\frac{\overline{\triangle}u}{n+1}\bar{g}\right|^2\geq0.$$

    The following lemma is useful for verifying whether a function satisfies the Hessian condition in \eqref{resolventfunction}.
\begin{Lemma}\cite[Lemma 2.1]{ciraolo2019serrin}
    Let $\varphi$ be a solution to
$$\overline{\triangle}\varphi+(n+1)K\varphi=1\quad \text{on} \,\,\mathbb{M}^{n+1}(K),$$
then $\overline{\triangle}P_\varphi=0$ if and only if
$$\overline{\nabla}^2\varphi=\left(\frac{1}{n+1}-K\varphi\right)\bar{g}\quad \text{on} \,\,\mathbb{M}^{n+1}(K).$$
\label{Laplician}
\vspace{-15pt}
\end{Lemma}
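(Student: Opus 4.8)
The plan is to read off the equivalence directly from the Bochner--Weitzenb\"ock identity already recorded just above the statement. The hypothesis on $\varphi$ is exactly $\overline{\triangle}\varphi+(n+1)K\varphi=1$, i.e. $\varphi$ solves the same equation as the function $u$ in the displayed formula $\overline{\triangle}P_u=\bigl|\overline{\nabla}^2u-\tfrac{\overline{\triangle}u}{n+1}\bar g\bigr|^2$. I would therefore substitute $u=\varphi$ there and rewrite the trace term using the equation: since $\tfrac{\overline{\triangle}\varphi}{n+1}=\tfrac{1-(n+1)K\varphi}{n+1}=\tfrac{1}{n+1}-K\varphi$, the identity becomes the pointwise equality
\begin{equation*}
\overline{\triangle}P_\varphi=\left|\overline{\nabla}^2\varphi-\left(\frac{1}{n+1}-K\varphi\right)\bar g\right|^2 .
\end{equation*}

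With this identity the lemma is immediate. The right-hand side is the squared norm of the symmetric $(0,2)$-tensor $\overline{\nabla}^2\varphi-\bigl(\tfrac{1}{n+1}-K\varphi\bigr)\bar g$, so it vanishes at a point precisely when that tensor vanishes at that point. Hence $\overline{\triangle}P_\varphi\equiv 0$ on $\mathbb{M}^{n+1}(K)$ if and only if $\overline{\nabla}^2\varphi=\bigl(\tfrac{1}{n+1}-K\varphi\bigr)\bar g$ everywhere, which is precisely the asserted equivalence; both implications are contained in the single identity.

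The only step carrying genuine content is the Bochner identity itself, which I would verify (if a self-contained argument is wanted) on the space form as follows. Bochner gives $\tfrac12\overline{\triangle}|\overline{\nabla}\varphi|^2=|\overline{\nabla}^2\varphi|^2+\bar g(\overline{\nabla}\varphi,\overline{\nabla}\,\overline{\triangle}\varphi)+\mathrm{Ric}(\overline{\nabla}\varphi,\overline{\nabla}\varphi)$; on $\mathbb{M}^{n+1}(K)$ one has $\mathrm{Ric}=nK\bar g$, and differentiating the equation yields $\overline{\nabla}\,\overline{\triangle}\varphi=-(n+1)K\overline{\nabla}\varphi$. The two curvature contributions then combine to $-K|\overline{\nabla}\varphi|^2$, and expanding $\overline{\triangle}P_\varphi=\tfrac12\overline{\triangle}|\overline{\nabla}\varphi|^2-\tfrac{1}{n+1}\overline{\triangle}\varphi+\tfrac{K}{2}\overline{\triangle}(\varphi^2)$ and substituting $\overline{\triangle}\varphi=1-(n+1)K\varphi$ reproduces exactly the perfect square above, the $K|\overline{\nabla}\varphi|^2$ terms cancelling. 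I do not expect any real obstacle here: once the Bochner formula is in place, the equivalence follows purely from the fact that the square of a norm is zero if and only if its argument is zero.
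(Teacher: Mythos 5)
Your proposal is correct and follows exactly the route the paper intends: the paper does not prove this lemma itself (it is cited from \cite[Lemma 2.1]{ciraolo2019serrin}), but it records the Bochner--Weitzenb\"ock identity $\overline{\triangle}P_u=\bigl|\overline{\nabla}^2u-\tfrac{\overline{\triangle}u}{n+1}\bar g\bigr|^2$ immediately above the statement, and your argument --- substituting $u=\varphi$, rewriting $\tfrac{\overline{\triangle}\varphi}{n+1}=\tfrac{1}{n+1}-K\varphi$ via the equation, and observing that a squared tensor norm vanishes iff the tensor does --- is precisely the standard proof. Your verification of the Bochner identity on the space form (using $\mathrm{Ric}=nK\bar g$ and $\overline{\nabla}\,\overline{\triangle}\varphi=-(n+1)K\overline{\nabla}\varphi$, with the $K|\overline{\nabla}\varphi|^2$ terms cancelling) is also accurate and matches the analogous computation the paper carries out later for the explicit $\varphi(r)=c_0\dot\psi(r)+\tfrac{K}{n+1}$.
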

    Three space form models, $\mathbb{R}^{n+1}$, $\mathbb{H}^{n+1}$ and $\mathbb{S}^{n+1}$, can be described as the warped product space $I\times\mathbb{S}^n$ equipped with the rotationally symmetric metric $$\bar{g}=dr^2+\psi^2g_{\mathbb{S}^n},$$ where $g_{\mathbb{S}^n}$ is the round metric on $\mathbb{S}^n$ and
\begin{itemize}
  \item $I=[0,\infty)$, $\psi(r)=r$ in the Euclidean case ($K=0$);
  \item $I=[0,\infty)$, $\psi(r)=\sinh r$ in the hyperbolic case ($K=-1$);
  \item $I=[0,\pi)$, $\psi(r)=\sin r$ in the spherical case ($K=1$).
\end{itemize}

    We just consider $K=\pm 1$ in the following. One can easily find that
\begin{equation}
\ddot{\psi}=-K\psi,\quad \dddot{\psi}=-K\dot{\psi}.
\label{psi}
\end{equation}
Enlightened by \cite{ciraolo2019serrin}, we know the function
\begin{equation}
\varphi(r):=c_0\dot{\psi}(r)+\frac{K}{n+1}.
\end{equation}
will satisfies $\overline{\triangle}\varphi+(n+1)K\varphi=1$ since
\begin{equation}
\overline{\triangle}\varphi=\ddot{\varphi}+n\dot{\psi}\psi^{-1}\dot{\varphi}=-c_0(n+1)K\dot{\psi},
\end{equation}
where $c_0$ is an undetermined constant for $\varphi$ to satisfy the boundary condition $\partial_{\bar{N}}\varphi=\kappa\varphi+\widetilde{c}$ on $S_{K,\kappa}$.

    Using \eqref{psi}, we get $\bar{g}(\overline{\nabla}\varphi,\overline{\nabla}\varphi)=c_0^2(\ddot{\psi})^2=c_0^2\psi^2$. Compute the Laplacian of $P_\varphi$ directly,
\begin{align*}
    \overline{\triangle}P_\varphi&=\frac{1}{2}\overline{\triangle}\bar{g}\left(\overline{\nabla}\varphi,\overline{\nabla}\varphi\right)-\frac{1}{n+1}\overline{\triangle}\varphi
+K\bar{g}\left(\overline{\nabla}\varphi,\overline{\nabla}\varphi\right)+K\varphi\overline{\triangle}\varphi\\
&=c_0^2\left(\dot{\psi}\right)^2+c_0^2\psi\overline{\triangle}\psi+c_0K\dot{\psi}+c_0^2K\psi^2+K\left(c_0\dot{\psi}+\frac{K}{n+1}\right)\left(-c_0(n+1)K\dot{\psi}\right)\\
&=c_0^2\left(\dot{\psi}\right)^2+c_0^2\psi\left(\ddot{\psi}+n\dot{\psi}\psi^{-1}\dot{\psi}\right)+c_0^2K\psi^2-c_0^2(n+1)\left(\dot{\psi}\right)^2\\
&=c_0^2\left(\dot{\psi}\right)^2-c_0^2K\psi^2+c_0^2n\left(\dot{\psi}\right)^2+c_0^2K\psi^2-c_0^2(n+1)\left(\dot{\psi}\right)^2\\
&=0,
\end{align*}
where we have used $K^2=1$ and \eqref{psi} in the computation. Hence, from Lemma \ref{Laplician} we know $\varphi$ satisfies $$\overline{\nabla}^2\varphi=\left(\frac{1}{n+1}-K\varphi\right)\bar{g}\quad\text{on}\,\,\mathbb{M}^{n+1}(K).$$

    Now, for each umbilical hypersurface case, we will determine the constant $c_0$ such that $\varphi$ satisfies the boundary condition $\partial_{\bar{N}}\varphi=\kappa\varphi+\widetilde{c}$ on $S_{K,\kappa}$.

    \textbf{Case 1:} $S_{K,\kappa}$ is a geodesic sphere of radius $R$ in $\mathbb{H}^{n+1}$ ($K=-1$), $\kappa=\coth R\in (1,\infty)$. Note that $\bar{N}=\frac{\partial}{\partial r}$, we can calculate directly as follows,
\begin{align*}
    \left(\partial_{\bar{N}}\varphi-\kappa\varphi\right)\big|_{S_{K,\kappa}}&=\dot{\varphi}(R)-\kappa\varphi(R)\\
    &=c_0\ddot{\psi}(R)-\kappa\left(c_0\dot{\psi}(R)-\frac{1}{n+1}\right)\\
    &=c_0\psi(R)-c_0\kappa\dot{\psi}(R)+\frac{\kappa}{n+1}\\
    &=c_0\left(\psi(R)-\kappa\dot{\psi}(R)\right)+\frac{\kappa}{n+1}\\
    &=c_0(1-\kappa^2)\sinh R+\frac{\kappa}{n+1},
\end{align*}
where we have used \eqref{psi} and $\kappa=\coth R$. Since $\sinh R>0, 1-\kappa^2\neq 0$, we can determine $c_0$ such that $\partial_{\bar{N}}\varphi=\kappa\varphi+\widetilde{c}$ on $S_{K,\kappa}$.

    \textbf{Case 2:} $S_{K,\kappa}=L_\alpha$ ($\alpha\in[0,\frac{\pi}{2})$) is an equidistant hypersurface in $\mathbb{H}^{n+1}$ ($K=-1$), $\kappa=\cos\alpha\in (0,1]$. To make it easier to differentiate on $S_{K,\kappa}$, we map $S_{K,\kappa}=L_\alpha$ to the Poincar\'{e} ball model (See Figure \ref{transform}). The formula for the coordinate transformation from the upper half-space model to the Poincar\'{e} ball model is \cite[Section 3.4]{beardon2012geometry}:
\begin{equation*}
\phi(x)=\phi_{E_{n+1},\sqrt{2}}\circ\sigma(x)=\frac{\left(2x_1,\cdots,2x_n,|x|^2-1\right)}{x_1^2+\cdots+x_n^2+(x_{n+1}+1)^2},
\end{equation*}
where $\phi_{E_{n+1},\sqrt{2}}$ is the reflection of the sphere $\mathbb{S}^n(E_{n+1},\sqrt{2})$ and $\sigma$ is the reflection of the plane $x_{n+1}=0$, namely
\begin{align*}
\phi_{E_{n+1},\sqrt{2}}(x)&=E_{n+1}+\frac{2(x-E_{n+1})}{|x-E_{n+1}|^2},\\
\sigma(x)&=(x_1,\cdots,x_n,-x_{n+1}).
\end{align*}
Then $\phi$ maps $L_\alpha\subset\mathbb{R}_+^{n+1}$ to
$$L_\alpha^\mathbb{B}=\left\{x\in\mathbb{B}^{n+1}\,\Bigg|\,\left(x_1-\frac{\tan\alpha}{2}\right)^2+x_2^2+\cdots x_n^2+\left(x_{n+1}-\frac{1}{2}\right)^2=\frac{\sec^2\alpha}{4}\right\}.$$
Now in $\left(\mathbb{B}^{n+1},\bar{g}=\frac{4}{(1-|x|^2)^2}\delta\right)$, $\varphi=c_0\frac{1+|x|^2}{1-|x|^2}-\frac{1}{n+1}$ since $r=\ln\frac{1+|x|}{1-|x|}$. Note that $$\bar{N}=\cos\alpha\left(1-|x|^2\right)\left[x-\frac{\tan\alpha}{2}E_1-\frac{1}{2}E_{n+1}\right] \text{on} \,\,L_\alpha^\mathbb{B},$$ then
\begin{align*}
    \left(\partial_{\bar{N}}\varphi-\kappa\varphi\right)\big|_{S_{K,\kappa}}&=\left(\bar{N}^i\frac{\partial\varphi}{\partial x_i}-\kappa\varphi\right)\Bigg|_{L_\alpha^\mathbb{B}}\\
    &=\frac{4c_0\cos\alpha}{1-|x|^2}\left(|x|^2-\frac{\tan\alpha}{2}x_1-\frac{1}{2}x_{n+1}\right)-\cos\alpha\left(c_0\frac{1+|x|^2}{1-|x|^2}-\frac{1}{n+1}\right)\\
    &=-c_0\cos\alpha+\frac{\cos\alpha}{n+1},
\end{align*}
where we have used $|x|^2-x_1\tan\alpha-x_{n+1}=0$ on $L_\alpha^\mathbb{B}$. Hence we can determine $c_0$ such that $\partial_{\bar{N}}\varphi=\kappa\varphi+\widetilde{c}$ on $S_{K,\kappa}$ since $\cos\alpha\neq 0$.
\begin{center}
\begin{figure}[h]
\begin{tikzpicture}[scale=1]
    \draw[white,fill=gray!25] (3,2) arc [radius=2,start angle=90,end angle=-47];
    \draw[white,fill=gray!25] (-5.8,2)--(-2.5,0)--(-2,0)--(-2,2);
    \draw[dashed,thick,->] (-6,0)--(-2,0);\node[below] at (-2,0) {\footnotesize$x_1$};
    \draw[->] (-4,0)--(-4,2.5);\node[left] at (-4,2.55) {\footnotesize$x_{n+1}$};
    \node[below] at (-4,0) {\footnotesize$o$};
    \draw[thick,red] (-2.5,0)--(-5.8,2);\node[red,above] at (-6,1.9) {\footnotesize$S_{K,\kappa}=L_\alpha$};\node[left] at (-4,0.82) {\footnotesize$1$};
    \draw (-2.8,0) arc [radius=0.25,start angle=180,end angle=142];\node[above] at (-3,-0.08) {\footnotesize$\alpha$};
    \draw[->] (-5.2,1.6364)--(-5.5,1.1414);\node[below] at (-5.5,1.1414) {\footnotesize$\bar{N}$};
    \node[left] at (3,2.55) {\footnotesize$x_{n+1}$};
    \draw[dashed,thick] (3,0) circle [radius=2];\draw[fill=black] (5.5,1) circle [radius=0.02];
    \draw[dashed] (5.5,1) circle [radius=2.6926];
    \draw[->] (-1.7,2)--(1,2);\node[above] at (0,2) {$\phi$};
    \node[below] at (2.9,0) {\footnotesize$o$};
    \draw[thick,red,fill=gray!25] (3,2) arc [radius=2.6926,start angle=158,end angle=245.5];
    \draw[->] (3,0)--(3,2.5);
    \draw[->] (3,0)--(7.3,0);\node[below] at (7.3,0) {\footnotesize$x_1$};
    \node[right] at (2.9,1.8) {\footnotesize$1$};
    \node[right] at (5.5,1) {\footnotesize$(\frac{\tan\alpha}{2},0,\cdots,0,\frac{1}{2})$};
    \draw (3,0)--(5.5,1);\node[red] at (2.5,1) {\footnotesize$L_{\alpha}^{\mathbb{B}}$};
    \node[above] at (4,0.3) {\footnotesize$\frac{\sec\alpha}{2}$};
    \draw[->] (3.6,-0.9)--(3.2,-1.3);\draw[dashed] (3.6,-0.9)--(5.5,1);\node[below] at (3.2,-1.3) {\footnotesize$\bar{N}$};
\end{tikzpicture}
\caption{The transformation from the upper half-space model to the Poincar\'{e} ball model.}
\label{transform}
\end{figure}
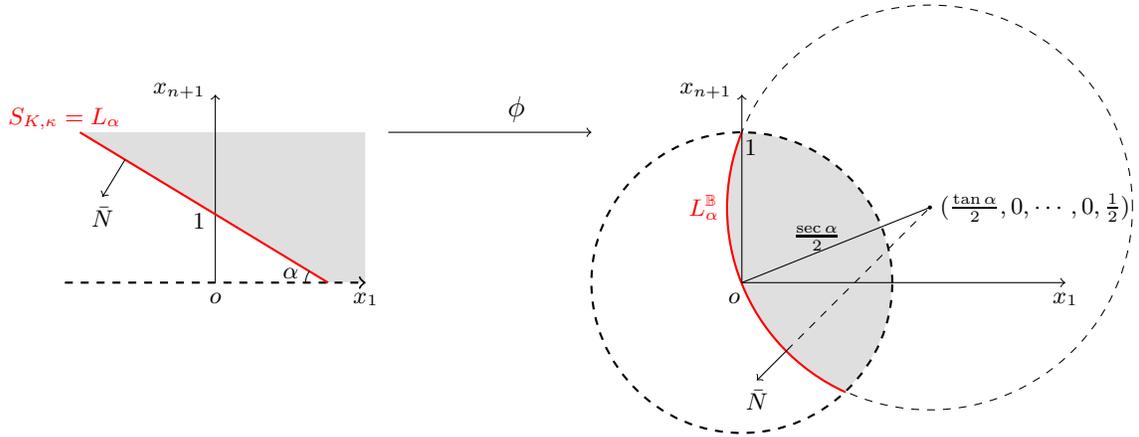
\end{center}

    \textbf{Case 3:} $S_{K,\kappa}$ is a totally geodesic hyperplane in $\mathbb{H}^{n+1}$ ($K=-1$), $\kappa=0$. In this case, we need to change the distance base point, otherwise it will happen that $\partial_{\bar{N}}\varphi=0$, which will make $c_0$ unable to determined. Precisely, we let $$\varphi=\dot{\psi}\left(\bar{d}(x,x_{c_0})\right)+\frac{K}{n+1},$$
where $x_{c_0}=(0,\cdots,0,c_0)\in\mathbb{B}^{n+1}$ with $|c_0|<1$, and $\bar{d}(x,x_{c_0})$ is the distance of $x, x_{c_0}\in\mathbb{B}^{n+1}$ under the Poincar\'{e} metric, then $\varphi$ still satisfy
\begin{equation*}
  \overline{\nabla}^2\varphi=\left(\frac{1}{n+1}-K\varphi\right)\bar{g}\quad\text{on} \;\mathbb{M}^{n+1}(K),
\end{equation*}
since isometry reason.
According to the formula for the distance between any two points in the Poincar\'{e} ball model \cite[Exercise 3.4]{beardon2012geometry}, we know
$$\varphi=\frac{1+c_0^2}{1-c_0^2}\frac{1+|x|^2}{1-|x|^2}-\frac{4c_0}{1-c_0^2}\frac{x_{n+1}}{1-|x|^2}-\frac{1}{n+1}.$$
Note that $\bar{N}=-\frac{1-|x|^2}{2}E_{n+1}$, thus
\begin{align*}
\left(\partial_{\bar{N}}\varphi-\kappa\varphi\right)\big|_{S_{K,\kappa}}&=-\frac{1-|x|^2}{2}\frac{\partial\varphi}{\partial x_{n+1}}\\
&=-\frac{1-|x|^2}{2}\left(\frac{1+c_0^2}{1-c_0^2}\frac{4x_{n+1}}{(1-|x|^2)^2}-\frac{4c_0}{1-c_0^2}\frac{1-|x|^2+2x_{n+1}^2}{(1-|x|^2)^2}\right)\\
&=\frac{2c_0}{1-c_0^2},
\end{align*}
where we have used $x_{n+1}=0$ on $S_{K,\kappa}$. Take $c_0=\frac{\widetilde{c}}{1+\sqrt{1+\widetilde{c}^2}}$, then $\partial_{\bar{N}}\varphi=\kappa\varphi+\widetilde{c}$ on $S_{K,\kappa}$.

    \textbf{Case 4:} $S_{K,\kappa}$ is a geodesic sphere of radius $R\in(0,\frac{\pi}{2}]$ in $\mathbb{S}^{n+1}$ ($K=1$), $\kappa=\cot R\geq 0$. Note that $\bar{N}=\frac{\partial}{\partial r}$, we can calculate directly as follows,
\begin{align*}
\left(\partial_{\bar{N}}\varphi-\kappa\varphi\right)\big|_{S_{K,\kappa}}&=c_0\ddot{\psi}(R)-\kappa\left(c_0\dot{\psi}(R)+\frac{1}{n+1}\right)\\
&=-c_0\left(\psi(R)+\kappa\dot{\psi}(R)\right)-\frac{\kappa}{n+1}\\
&=-c_0\sin R(1+\kappa^2)-\frac{\kappa}{n+1},
\end{align*}
where we have used \eqref{psi} and $\kappa=\cot R$. Since $\sin R\neq 0$, $1+\kappa^2\neq 0$, we can determine $c_0$ such that $\partial_{\bar{N}}\varphi=\kappa\varphi+\widetilde{c}$ on $S_{K,\kappa}$.

    So far, we have obtained the following proposition.
\begin{Proposition}
For each umbilical hypersurface $S_{K,\kappa}$ in $\mathbb{M}^{n+1}(K)$ ($K=\pm 1$), there exists an auxiliary function $\varphi\in C^\infty(\mathbb{M}^{n+1}(K))$ that satisfies
\begin{align}
\left\{
\begin{aligned}
  \overline{\nabla}^2\varphi=\left(\frac{1}{n+1}-K\varphi\right)\bar{g}\quad&\text{on}\,\,\mathbb{M}^{n+1}(K),\\
  \partial_{\bar{N}}\varphi=\kappa \varphi+\widetilde{c} \quad&\text{on}\,\,S_{K,\kappa}.\\
\end{aligned}
\right.
\end{align}
\label{existphi}
\end{Proposition}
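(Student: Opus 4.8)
The plan is to look for $\varphi$ as a radial function with respect to a suitably chosen base point, so that the Hessian equation holds automatically for a whole one-parameter family and only a single scalar constant in the ansatz must be tuned to meet the Robin condition on $S_{K,\kappa}$. First I would reduce the Hessian equation to a scalar ODE: writing the three space forms as warped products $\bar{g} = dr^2 + \psi^2 g_{\mathbb{S}^n}$ with $\ddot\psi = -K\psi$, I would take the ansatz $\varphi(r) = c_0\dot\psi(r) + \frac{K}{n+1}$ and verify directly that $\overline{\triangle}\varphi + (n+1)K\varphi = 1$. By Lemma \ref{Laplician} it then suffices to check $\overline{\triangle}P_\varphi = 0$, which follows from a short computation using only $\ddot\psi = -K\psi$, $\dddot\psi = -K\dot\psi$ and $K^2 = 1$. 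Crucially this holds for every value of $c_0$, leaving the constant free to absorb the boundary data.

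The second step is to fix $c_0$ from $\partial_{\bar{N}}\varphi = \kappa\varphi + \widetilde{c}$ on $S_{K,\kappa}$, handled case by case. When $S_{K,\kappa}$ is a geodesic sphere (Cases 1 and 4), the base point can be taken at the center, so $\bar{N} = \partial_r$ and the condition becomes an affine equation in $c_0$; using $\ddot\psi = -K\psi$ together with $\kappa = \coth R$ (resp. $\kappa = \cot R$) one finds the coefficient of $c_0$ equals $(1-\kappa^2)\sinh R$ (resp. $-(1+\kappa^2)\sin R$), which is nonzero, so $c_0$ is uniquely determined. For the equidistant hypersurface (Case 2) the natural radial center does not lie on $S_{K,\kappa}$, so $\bar{N} \ne \partial_r$; here I would pass to the Poincar\'e ball model, where $L_\alpha$ becomes a Euclidean sphere $L_\alpha^{\mathbb{B}}$ with explicit outward normal, and compute $\partial_{\bar{N}}\varphi$ directly, obtaining $-c_0\cos\alpha + \frac{\cos\alpha}{n+1}$, again affine in $c_0$ with nonzero slope since $\cos\alpha \ne 0$.

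The main obstacle is Case 3, the totally geodesic hyperplane with $\kappa = 0$. The difficulty is that the symmetric radial function centered at the origin has its gradient tangent to the hyperplane, so $\partial_{\bar{N}}\varphi \equiv 0$ and $c_0$ cannot be recovered from the boundary condition. To break this degeneracy I would exploit the isometry invariance of the Hessian equation and recenter the distance function at an interior point $x_{c_0} = (0,\ldots,0,c_0)$ with $|c_0| < 1$ on the axis normal to the hyperplane; the Hessian equation still holds since it is preserved under hyperbolic isometries. Using the explicit distance formula in the ball model and $\bar{N} = -\frac{1-|x|^2}{2}E_{n+1}$, the Neumann derivative on $\{x_{n+1}=0\}$ evaluates to $\frac{2c_0}{1-c_0^2}$, a strictly monotone function of $c_0 \in (-1,1)$ surjecting onto $\mathbb{R}$, so the choice $c_0 = \widetilde{c}/(1+\sqrt{1+\widetilde{c}^2})$ solves $\partial_{\bar{N}}\varphi = \widetilde{c}$. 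Assembling the four cases yields the stated $\varphi \in C^\infty(\mathbb{M}^{n+1}(K))$, completing the proof.
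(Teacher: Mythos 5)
Your proposal follows essentially the same route as the paper: the radial ansatz $\varphi = c_0\dot\psi + \frac{K}{n+1}$ verified via Lemma \ref{Laplician}, then the same case-by-case determination of $c_0$, including the recentering trick at $x_{c_0}$ for the totally geodesic hyperplane and the passage to the Poincar\'e ball model for the equidistant case. The argument is correct and matches the paper's proof in all essential details.
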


\subsection{Some integral formulas}\

    In this subsection, we will give some basic integral formulas and the Minkowski formulas, and then prove Proposition \ref{meancurvature} about the mean curvature which is useful in the contact angle characterization.
\begin{Lemma}
For each umbilical hypersurface case $k=1,2,3,4$, there hold
\begin{align}
\int_\Sigma\bar{g}(\nu,Y_k)dA&=-\frac{1}{\mathcal{M}_k}\int_TV_kdA,\label{div1}\\
\int_\Sigma H\bar{g}(\nu,Y_k)dA&=-\int_\Gamma\bar{g}(\mu,Y_k)ds\label{div2}.
\end{align}
See \eqref{Minkowski} for the definition of $\mathcal{M}_k$.
\end{Lemma}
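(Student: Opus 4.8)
The plan is to derive both identities from divergence theorems, using that each $Y_k$ is a Killing field (Proposition \ref{XYproperty}(3)). The Killing equation $\mathcal{L}_{Y_k}\bar{g}=0$ forces $\bar{g}(\overline{\nabla}_{e_i}Y_k,e_i)=0$ for every unit vector $e_i$; taking the full trace gives $\overline{\mathrm{div}}\,Y_k=0$ on $\Omega$, and taking the trace over a frame tangent to $\Sigma$ gives the vanishing of the tangential divergence $\mathrm{div}^\Sigma Y_k=0$ along $\Sigma$. These two facts drive \eqref{div1} and \eqref{div2} respectively.

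For \eqref{div1} I would apply the divergence theorem on $\Omega$ to $Y_k$. Writing $\partial\Omega=\overline{\Sigma}\cup T$ with outward normals $\nu$ on $\Sigma$ and $\bar{N}$ on $T$, the vanishing of $\overline{\mathrm{div}}\,Y_k$ yields
$$0=\int_\Omega\overline{\mathrm{div}}\,Y_k\,dx=\int_\Sigma\bar{g}(\nu,Y_k)\,dA+\int_T\bar{g}(\bar{N},Y_k)\,dA.$$
It then remains to identify the integrand on $T\subset S_{K,\kappa}$. From the explicit expressions for $Y_k$, $V_k$ and the outward normal $\bar{N}$ collected in Section \ref{section2}, a direct computation gives the pointwise proportionality $\bar{g}(\bar{N},Y_k)=\frac{1}{\mathcal{M}_k}V_k$ on $S_{K,\kappa}$, which is precisely the relation recorded in \eqref{Minkowski} defining $\mathcal{M}_k$. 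Substituting this into the boundary integral produces \eqref{div1}.

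For \eqref{div2} I would argue intrinsically on the hypersurface $\Sigma$ with boundary $\Gamma$. Splitting $Y_k=Y_k^\top+\bar{g}(Y_k,\nu)\nu$ into tangential and normal parts and using the Weingarten relation $\mathrm{div}^\Sigma\nu=H$ (mean curvature with respect to $\nu$), one finds
$$\mathrm{div}^\Sigma Y_k=\mathrm{div}^\Sigma(Y_k^\top)+H\,\bar{g}(Y_k,\nu).$$
Since the left-hand side vanishes, $\mathrm{div}^\Sigma(Y_k^\top)=-H\,\bar{g}(Y_k,\nu)$. Integrating over $\Sigma$ and applying the divergence theorem on $(\Sigma,\Gamma)$, whose outward unit conormal is $\mu$ (tangent to $\Sigma$, so $\bar{g}(Y_k^\top,\mu)=\bar{g}(Y_k,\mu)$), gives
$$-\int_\Sigma H\,\bar{g}(\nu,Y_k)\,dA=\int_\Sigma\mathrm{div}^\Sigma(Y_k^\top)\,dA=\int_\Gamma\bar{g}(\mu,Y_k)\,ds,$$
which rearranges to \eqref{div2}.

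The main obstacle is the pointwise identity $\bar{g}(\bar{N},Y_k)=\frac{1}{\mathcal{M}_k}V_k$ needed for \eqref{div1}: it admits no single coordinate-free proof and must be checked separately in each of the four cases from the data of Section \ref{section2}, with Case 2 additionally requiring the coordinate change to the Poincar\'e ball model to write $\bar{N}$ on $L_\alpha^{\mathbb{B}}$ before computing. A secondary point is bookkeeping of signs: one must fix the convention $H=\mathrm{div}^\Sigma\nu$ (under which a geodesic sphere with outward normal has positive mean curvature, consistent with the conclusion of Theorem \ref{Theorem}) and invoke the orientation compatibility of $\{\nu,\mu\}$ along $\Gamma$ from \eqref{vectors4}, so that the conormal appearing in the divergence theorem is the outward one; these are exactly what make the signs in \eqref{div1} and \eqref{div2} come out as stated.
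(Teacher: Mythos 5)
Your proposal is correct and follows essentially the same route as the paper: the divergence theorem on $\Omega$ plus the case-by-case pointwise identity $\bar{g}(\bar{N},Y_k)=\frac{1}{\mathcal{M}_k}V_k$ for \eqref{div1}, and the vanishing of the tangential trace of $\overline{\nabla}Y_k$ (Killing property) combined with the divergence theorem on $(\Sigma,\Gamma)$ for \eqref{div2}; your decomposition $Y_k=Y_k^\top+\bar{g}(Y_k,\nu)\nu$ is just a cleaner phrasing of the paper's frame computation of $\mathrm{div}^\Sigma(Y_k^t)$. You also correctly isolate the only real work, namely verifying $\bar{g}(\bar{N},Y_k)=\frac{1}{\mathcal{M}_k}V_k$ in each of the four cases, which the paper leaves as a "direct computation."
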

\begin{proof}
Since $\overline{\rm{div}}\,Y_k=0$, by using the divergence theorem in $\Omega$, we have
$$0=\int_\Omega\overline{\rm{div}}\,Y_kdx=\int_\Sigma\bar{g}(\nu,Y_k)dA+\int_T\bar{g}(\bar{N},Y_k)dA.$$
A direct computation shows that $\bar{g}(\bar{N},Y_k)=\frac{1}{\mathcal{M}_k}V_k$, hence
$$\int_\Sigma\bar{g}(\nu,Y_k)dA=-\int_T\bar{g}(\bar{N},Y_k)dA=-\frac{1}{\mathcal{M}_k}\int_TV_kdA.$$

Let $\{e_i\}_{i=1}^n$ be any orthonormal frame on $\Sigma$, then
\begin{align*}
\text{div}^\Sigma(Y_k^t) &= \sum_{i=1}^n\bar{g}(\nabla_{e_i}^\Sigma Y_k^t,e_i) \\
&= \sum_{i=1}^n\bar{g}(\overline{\nabla}_{e_i}Y_k^t,e_i) \\
&= \sum_{i=1}^n\left[e_i\bar{g}(Y_k^t,e_i)-\bar{g}(Y_k^t,\overline{\nabla}_{e_i}e_i)\right] \\
&= \sum_{i=1}^n\left[e_i\bar{g}(Y_k,e_i)-\bar{g}(Y_k,\overline{\nabla}_{e_i}e_i)+\bar{g}(\nu,Y_k)\bar{g}(\nu,\overline{\nabla}_{e_i}e_i)\right] \\
&= \sum_{i=1}^n\bar{g}(\overline{\nabla}_{e_i}Y_k,e_i)+\bar{g}(\nu,Y_k)\sum_{i=1}^n\bar{g}(\nu,\overline{\nabla}_{e_i}e_i), \\
&= -H\bar{g}(\nu,Y_k),
    \end{align*}
where we have used $Y_k$ is a Killing vector field in the last equality. By using the divergence theorem on $\Sigma$, we have
$$\int_\Sigma H\bar{g}(\nu,Y_k)dA=-\int_\Sigma\text{div}^\Sigma(Y_k^t)dA=-\int_\Gamma\bar{g}(\mu,Y_k^t)ds=-\int_\Gamma\bar{g}(\mu,Y_k)ds,$$
hence we prove \eqref{div2}.
\end{proof}

    If a hypersurface $\Sigma$ meets an umbilical hypersurface at a constant angle $\theta\in(0,\pi)$, a Minkowski formula will hold by standard argument. In \cite{wang2019uniqueness}, the authors give a Minkowski formula for hypersurface $\Sigma$ support on a geodesic sphere in a space form $\mathbb{M}^{n+1}(K)$ ($K=0,-1,1$). Minkowski formulas for hypersurface $\Sigma$ supported on a totally geodesic hyperplane and equidsitant hypersurface are given in \cite{chen2022some}. For the convenience of readers, we state them in the following proposition.
\begin{Proposition}[Minkowski formulas]
If $\Sigma$ meets $S_{K,k}$ at a constant angle $\theta\in(0,\pi)$, then
\begin{equation}
\int_\Sigma n\left(V_k+\mathcal{M}_k\cos\theta\,\bar{g}(\nu,Y_k)\right)dA=\int_\Sigma H\bar{g}(\nu,X_k)dA,
\label{minkowskiformula}
\end{equation}
where
\begin{align}
\mathcal{M}_k:=\left\{
\begin{aligned}
\sinh R, \,\,k&=1,\\
-\sec\alpha, \,\,k&=2,\\
-1, \,\,k&=3,\\
\sin R, \,\,k&=4.
\end{aligned}
\right.
\label{Minkowski}
\end{align}
\end{Proposition}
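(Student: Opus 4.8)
The plan is to run the standard conformal-Killing Minkowski argument, organized around two applications of the divergence theorem — one on $\Sigma$ and one on the supporting region $T\subset S_{K,\kappa}$ — and then to close the identity using the Killing-field relation \eqref{div1}. The computation mirrors the proof of \eqref{div2}, with the conformal Killing field $X_k$ playing the role of $Y_k$; the new feature is an extra boundary-to-bulk conversion on $T$ that produces the $\mathcal{M}_k\cos\theta$ term.

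First I would compute the tangential divergence of $X_k^t:=X_k-\bar{g}(X_k,\nu)\nu$ along $\Sigma$, exactly as in the proof of \eqref{div2} but using property (1) of Proposition \ref{XYproperty} (so that $\bar{g}(\overline{\nabla}_{e_i}X_k,e_i)=V_k$ in each tangent direction) together with $\sum_i\bar{g}(\nu,\overline{\nabla}_{e_i}e_i)=-H$. This should give
$$\text{div}^\Sigma(X_k^t)=nV_k-H\bar{g}(\nu,X_k).$$
Integrating over $\Sigma$ and applying the divergence theorem, and noting $\bar{g}(\mu,X_k^t)=\bar{g}(\mu,X_k)$ since $\mu$ is tangent to $\Sigma$, I expect
$$\int_\Sigma\left(nV_k-H\bar{g}(\nu,X_k)\right)dA=\int_\Gamma\bar{g}(\mu,X_k)ds.$$

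Next I would simplify the boundary term along $\Gamma$. The two essential inputs are the constant-angle decomposition \eqref{vectors4}, $\mu=\sin\theta\bar{N}+\cos\theta\bar{\nu}$, and the tangency property \eqref{XorthoN}, $\bar{g}(X_k,\bar{N})=0$ on $S_{K,\kappa}\supset\Gamma$; together they collapse the integrand to $\bar{g}(\mu,X_k)=\cos\theta\,\bar{g}(\bar{\nu},X_k)$. To turn this into a bulk integral I would apply the divergence theorem a second time, now on the $n$-dimensional region $T\subset S_{K,\kappa}$ whose boundary is $\Gamma$ with outward conormal $\bar{\nu}$: since $X_k$ is tangent to $S_{K,\kappa}$ and conformal Killing, its intrinsic divergence on $S_{K,\kappa}$ is again $nV_k$, so
$$\int_\Gamma\bar{g}(\bar{\nu},X_k)ds=\int_T\text{div}^{S_{K,\kappa}}(X_k)dA=n\int_TV_k\,dA.$$
Combining the three displays yields
$$\int_\Sigma H\bar{g}(\nu,X_k)dA=\int_\Sigma nV_k\,dA-n\cos\theta\int_TV_k\,dA,$$
and eliminating the $T$-integral by \eqref{div1}, $\int_TV_k\,dA=-\mathcal{M}_k\int_\Sigma\bar{g}(\nu,Y_k)dA$, reproduces \eqref{minkowskiformula} after collecting terms.

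The argument has no deep obstacle; the delicate points are all bookkeeping. I expect the main care to be needed in fixing the sign of $-H$ in $\text{div}^\Sigma$ (matching the convention already used for \eqref{div2}) and in keeping the orientation of $\bar{\nu}$ as the outward conormal of $T$ consistent between the two divergence theorems, so that $\mathcal{M}_k$ enters with the sign recorded in \eqref{Minkowski} (which in turn rests on the identity $\bar{g}(\bar{N},Y_k)=\mathcal{M}_k^{-1}V_k$ underlying \eqref{div1}). The orthogonal case $\cos\theta=0$ is automatic, since the correction term then vanishes on both sides.
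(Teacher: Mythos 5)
Your argument is correct, and it is worth noting that the paper itself does not prove this proposition at all: it invokes the formula as "standard" and cites \cite{wang2019uniqueness} and \cite{chen2022some}, so your write-up supplies the missing proof rather than an alternative to one. The three steps check out against the conventions the paper has already fixed. The tangential divergence computation $\mathrm{div}^\Sigma(X_k^t)=nV_k-H\bar{g}(\nu,X_k)$ follows the proof of \eqref{div2} verbatim, with the only change being that $\sum_{i=1}^n\bar{g}(\overline{\nabla}_{e_i}X_k,e_i)=nV_k$ by Proposition \ref{XYproperty}(1) instead of $0$; the sign of $-H$ is thereby forced to agree with the one used for \eqref{div2}. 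The boundary term collapses to $\cos\theta\,\bar{g}(\bar{\nu},X_k)$ by \eqref{vectors4} and \eqref{XorthoN}, and the constant-angle hypothesis is exactly what lets you pull $\cos\theta$ out of the $\Gamma$-integral. The second divergence theorem is legitimate because $X_k$ is tangent to $S_{K,\kappa}$ by \eqref{XorthoN}, so its intrinsic divergence there is again $nV_k$ and $\int_\Gamma\bar{g}(\bar{\nu},X_k)\,ds=n\int_TV_k\,dA$ with $\bar{\nu}$ the outward conormal of $T$ as the paper defines it. Eliminating $\int_TV_k\,dA$ via \eqref{div1} then reproduces \eqref{minkowskiformula} with $\mathcal{M}_k$ entering through the identity $\bar{g}(\bar{N},Y_k)=\mathcal{M}_k^{-1}V_k$, exactly as you anticipate. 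I see no gap; the only caveat is the one you already flag, namely that all signs rest on the orientation conventions of Figure \ref{domain} and the proofs of \eqref{div1}--\eqref{div2}, which your computation respects.
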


\begin{Remark}
 For simplicity, for each umbilical hypersurface case, we will directly use $X$, $V$, $Y$, $\mathcal{M}$ instead of $X_k$, $V_k$, $Y_k$, $\mathcal{M}_k$ in the following statement.
\end{Remark}

\begin{Proposition}
Given $\theta\in(0,\pi)$, if $\Sigma$ is a CMC hypersurface meeting $S_{K,\kappa}$ at a fixed contact angle $\theta$, then
\begin{equation}\label{meancurvature}
H\left(\int_\Omega Vdx+\frac{n}{n+1}\cos\theta\frac{(\int_T VdA)^2}{\mathcal{M}\int_\Gamma\bar{g}(\mu,Y)ds}\right)=\frac{n}{n+1}\int_\Sigma VdA.
\end{equation}
\end{Proposition}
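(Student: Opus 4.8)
The plan is to assemble the identity \eqref{meancurvature} from three ingredients already available: the Minkowski formula \eqref{minkowskiformula}, the two divergence identities \eqref{div1}--\eqref{div2}, and the divergence theorem applied to the conformal Killing field $X$. The CMC hypothesis is what makes this work: since $H$ is constant it may be pulled outside every integral over $\Sigma$, turning the Minkowski formula and \eqref{div2} into scalar relations that can be combined with \eqref{div1}.

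First I would convert the boundary integral $\int_\Sigma \bar{g}(\nu,X)\,dA$ into a bulk integral. Applying the divergence theorem on $\Omega$ to $X$, using $\overline{\rm div}\,X=(n+1)V$ from Proposition \ref{XYproperty}(1) and the tangency $\bar{g}(X,\bar{N})=0$ on $T\subset S_{K,\kappa}$ from \eqref{XorthoN}, the contribution over $T$ drops out and leaves
\[
\int_\Sigma \bar{g}(\nu,X)\,dA=(n+1)\int_\Omega V\,dx.
\]
Substituting this into the right-hand side of \eqref{minkowskiformula} and pulling the constant $H$ out gives
\[
n\int_\Sigma V\,dA+n\mathcal{M}\cos\theta\int_\Sigma \bar{g}(\nu,Y)\,dA=(n+1)H\int_\Omega V\,dx.
\]
Now I would use \eqref{div1} to replace $\int_\Sigma \bar{g}(\nu,Y)\,dA$ by $-\frac{1}{\mathcal{M}}\int_T V\,dA$; the factors of $\mathcal{M}$ cancel, and dividing by $n+1$ produces the clean intermediate relation
\[
H\int_\Omega V\,dx+\frac{n}{n+1}\cos\theta\int_T V\,dA=\frac{n}{n+1}\int_\Sigma V\,dA.
\]

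To match the exact quadratic form stated in \eqref{meancurvature}, it remains only to rewrite $\int_T V\,dA$ through $\Gamma$. Combining \eqref{div2} (with $H$ constant) and \eqref{div1} yields
\[
\int_\Gamma \bar{g}(\mu,Y)\,ds=-H\int_\Sigma \bar{g}(\nu,Y)\,dA=\frac{H}{\mathcal{M}}\int_T V\,dA,
\]
that is, $H\int_T V\,dA=\mathcal{M}\int_\Gamma \bar{g}(\mu,Y)\,ds$. Hence
\[
\frac{\left(\int_T V\,dA\right)^2}{\mathcal{M}\int_\Gamma \bar{g}(\mu,Y)\,ds}=\frac{1}{H}\int_T V\,dA,
\]
so multiplying by $H\,\frac{n}{n+1}\cos\theta$ recovers exactly the term $\frac{n}{n+1}\cos\theta\int_T V\,dA$ appearing in the intermediate relation. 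Substituting back reproduces \eqref{meancurvature}.

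There is no deep obstacle here; the argument is essentially bookkeeping once the right inputs are lined up. The single delicate point is the first step, where the tangency \eqref{XorthoN} is precisely what kills the $T$-boundary term and converts $\int_\Sigma \bar{g}(\nu,X)\,dA$ into $(n+1)\int_\Omega V\,dx$; without it the Minkowski relation could not be closed against the bulk integral. I would also note that the stated form presupposes a nonzero denominator $\int_\Gamma \bar{g}(\mu,Y)\,ds$, which by the identity $\mathcal{M}\int_\Gamma \bar{g}(\mu,Y)\,ds=H\int_T V\,dA$ and the positivity of $V$ on $T$ holds exactly when $H\neq 0$, the regime relevant to the subsequent application.
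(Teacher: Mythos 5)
Your proposal is correct and follows essentially the same route as the paper: divergence theorem on $X$ with the tangency \eqref{XorthoN}, the Minkowski formula \eqref{minkowskiformula} combined with \eqref{div1}, and then the relation $H\int_T V\,dA=\mathcal{M}\int_\Gamma\bar{g}(\mu,Y)\,ds$ from \eqref{div1}--\eqref{div2} to recast the $\int_T V\,dA$ term. Your closing observation that the denominator is nonzero precisely when $H\neq 0$ is a sensible addition but does not change the argument.
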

\begin{proof}
A simple integration by parts gives
\begin{equation}
(n+1)\int_\Omega Vdx=\int_\Omega\overline{\text{div}}\,Xdx=\int_\Sigma\bar{g}(\nu,X)dA,
\end{equation}
where we have used property \eqref{XorthoN} in Proposition \ref{XYproperty} in the second equality. Since $H$ is a constant, from \eqref{div1} and the Minkowski formula \eqref{minkowskiformula}, we have
\begin{align}
\begin{aligned}
\int_\Sigma VdA&=\frac{H}{n}\int_\Sigma\bar{g}(\nu,X)dA-\mathcal{M}\cos\theta\int_\Sigma\bar{g}(\nu,Y)dA\\
&=\frac{n+1}{n}H\int_\Omega Vdx+\cos\theta\int_TVdA,
\end{aligned}
\label{intV}
\end{align}
Since $H$ is a constant again, \eqref{div1} and \eqref{div2} imply $$H=\frac{\mathcal{M}\int_\Gamma\bar{g}(\mu,Y)ds}{\int_TVdA},$$
thus we may rewrite the second term on the right of \eqref{intV} to be
$$\cos\theta\int_TVdA=\cos\theta\frac{\left(\int_TVdA\right)^2}{\mathcal{M}\int_\Gamma\bar{g}(\mu,Y)ds}H,$$
which in turn gives that
\begin{align*}
\int_\Sigma VdA&=\frac{n+1}{n}H\left(\int_\Omega Vdx+\frac{n}{n+1}\cos\theta\frac{\left(\int_TVdA\right)^2}{\mathcal{M}\int_\Gamma\bar{g}(\mu,Y)ds}\right)\\
&=\frac{n+1}{n}H\left(\int_\Omega Vdx-c(\theta,\Omega)\int_T VdA\right).
\end{align*}
\end{proof}

\section{Integral identities and proof of Theorem \ref{Theorem}}\label{section4}
    The regularity assumption $u\in W^{1,\infty}(\Omega)\cap W^{2,2}(\Omega)$ in Theorem \ref{Theorem} is for technical reason, that is, we will use an integration method which requires to perform integration by parts.
\begin{Lemma}\cite[Lemma 2.1]{pacella2020overdetermined}
Let $F: \Omega\rightarrow\mathbb{R}^{n+1}$ be a vector field such that
$$F\in C^1(\overline{\Omega}\,\backslash\,\Gamma)\cap L^2(\Omega)\,\,\text{and}\,\,\overline{\rm div}\,F\in L^1(\Omega),$$
then $$\int_\Omega\overline{\rm div}\,Fdx=\int_\Sigma\bar{g}(F,\nu)dA+\int_T\bar{g}(F,\bar{N})dA.$$
\end{Lemma}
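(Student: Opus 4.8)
The statement is a divergence theorem for a vector field that is smooth only away from the singular edge $\Gamma$, so the plan is to reduce to the classical divergence theorem on subdomains that avoid $\Gamma$ and then show that the excised region contributes nothing in the limit. First I would fix a small $\epsilon>0$ and remove from $\Omega$ a tubular neighborhood $N_\epsilon=\{p\in\overline{\Omega}:\mathrm{dist}(p,\Gamma)<\epsilon\}$ of the edge, setting $\Omega_\epsilon=\Omega\backslash\overline{N_\epsilon}$. Since $\Gamma$ is a compact smooth $(n-1)$-submanifold and $\partial\Omega$ is piecewise smooth, for small $\epsilon$ the set $\Omega_\epsilon$ is a Lipschitz domain whose closure avoids $\Gamma$, so $F\in C^1(\overline{\Omega_\epsilon})$ and the ordinary divergence theorem applies:
$$\int_{\Omega_\epsilon}\overline{\rm div}\,F\,dx=\int_{\Sigma_\epsilon}\bar{g}(F,\nu)dA+\int_{T_\epsilon}\bar{g}(F,\bar{N})dA+\int_{C_\epsilon}\bar{g}(F,\nu_\epsilon)dA,$$
where $\Sigma_\epsilon=\Sigma\backslash N_\epsilon$, $T_\epsilon=T\backslash N_\epsilon$, and the lateral cap $C_\epsilon=\partial N_\epsilon\cap\Omega$ carries the outward normal $\nu_\epsilon$ of $\Omega_\epsilon$.

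Next I would pass to the limit $\epsilon\to0$ in the three terms that should survive. Because $\overline{\rm div}\,F\in L^1(\Omega)$ and $|N_\epsilon|\to0$, dominated convergence gives $\int_{\Omega_\epsilon}\overline{\rm div}\,F\,dx\to\int_\Omega\overline{\rm div}\,F\,dx$; since $F$ is continuous up to $\Sigma$ and $T$ off $\Gamma$, the truncated boundary integrals converge to $\int_\Sigma\bar{g}(F,\nu)dA$ and $\int_T\bar{g}(F,\bar{N})dA$. The only delicate point here is that these limiting boundary integrals be finite, which is implicit in the hypotheses and follows once $F$ is integrable on $\partial\Omega\backslash\Gamma$.

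The heart of the argument, and the step I expect to be the \textbf{main obstacle}, is controlling the flux through the cap $C_\epsilon$ near the edge, where $F$ may blow up and only the bulk bound $F\in L^2(\Omega)$ is available, giving no pointwise control at $\Gamma$. The key is a dimension count: since $\Gamma$ has dimension $n-1$, the tube has volume $|N_\epsilon|=O(\epsilon^2)$. Applying the co-area formula to the distance $t\mapsto\mathrm{dist}(\cdot,\Gamma)$ (whose gradient has unit length) and then Cauchy--Schwarz,
$$\int_0^\epsilon\left(\int_{C_t}|F|dA\right)dt\leq C\int_{N_\epsilon}|F|dx\leq C|N_\epsilon|^{1/2}\|F\|_{L^2(N_\epsilon)}\leq C'\epsilon\,\|F\|_{L^2(N_\epsilon)}.$$
As $|N_\epsilon|\to0$ we have $\|F\|_{L^2(N_\epsilon)}\to0$, so the right-hand side is $o(\epsilon)$; hence the average over $[0,\epsilon]$ of the nonnegative function $t\mapsto\int_{C_t}|F|dA$ tends to $0$, which forces $\liminf_{t\to0}\int_{C_t}|F|dA=0$. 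Choosing a sequence $\epsilon_j\to0$ realizing this liminf gives $\big|\int_{C_{\epsilon_j}}\bar{g}(F,\nu_{\epsilon_j})dA\big|\leq\int_{C_{\epsilon_j}}|F|dA\to0$. Taking $\epsilon=\epsilon_j\to0$ in the truncated identity and combining with the convergence of the other three terms yields the claimed formula. The moral is that the $L^2$ and $L^1$ assumptions are exactly what is needed to absorb the singular edge: they cannot bound $F$ pointwise there, but averaging in the normal direction and exploiting the quadratic smallness of the tube extracts a good sequence of radii along which the edge flux vanishes.
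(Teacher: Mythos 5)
The paper does not prove this lemma at all---it is quoted directly from Pacella--Tralli \cite[Lemma 2.1]{pacella2020overdetermined}---and your excision argument (removing an $\epsilon$-tube around the codimension-two edge $\Gamma$, applying the classical divergence theorem on $\Omega_\epsilon$, and using the coarea formula, Cauchy--Schwarz and $|N_\epsilon|=O(\epsilon^2)$ to extract a sequence of radii along which the cap flux vanishes) is essentially the proof given in that reference. Your reasoning is correct, including your flag that integrability of $F$ on $\partial\Omega\backslash\Gamma$ is needed for the truncated boundary integrals to converge to the stated ones (harmless here, since the lemma is only applied with $u\in W^{1,\infty}(\Omega)$, so $F$ is bounded).
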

\begin{Remark}
With this lemma, all of the following integration by parts formulas can be performed since $u\in C^\infty(\overline{\Omega}\,\backslash\,\Gamma)\cap W^{1,\infty}(\Omega)\cap W^{2,2}(\Omega)$.
\end{Remark}
\begin{Proposition}
Let $Z$ be a tangent vector field to $T$, $u$ be the unique solution to \eqref{BVP}, then
\begin{equation}
\overline{\nabla}^2u\left(\bar{N},Z\right)=0\,\,\text{on}\,\,T.
\label{Hessianu}
\end{equation}
\end{Proposition}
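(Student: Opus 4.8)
The plan is to compute the mixed Hessian directly from its definition as a covariant derivative and reduce everything to the Robin boundary condition on $T$ together with the umbilicity of the support hypersurface $S_{K,\kappa}$. Fix a point of $T$ and let $Z$ be tangent to $T$ there. Using the symmetry of $\overline{\nabla}^2u$ and writing the Hessian as $\overline{\nabla}^2u(\bar N,Z)=\overline{\nabla}^2u(Z,\bar N)=\bar{g}(\overline{\nabla}_Z\overline{\nabla}u,\bar N)$, I would first record, via metric compatibility of $\overline{\nabla}$, the product-rule identity
\begin{equation*}
\overline{\nabla}^2u(Z,\bar N)=Z\big(\bar{g}(\overline{\nabla}u,\bar N)\big)-\bar{g}\big(\overline{\nabla}u,\overline{\nabla}_Z\bar N\big).
\end{equation*}
This splits the quantity into two terms that I will show cancel.

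For the first term, I use that the Robin condition in \eqref{BVP} reads $\bar{g}(\overline{\nabla}u,\bar N)=\partial_{\bar N}u=\kappa u+\widetilde{c}$ on $T$. Since $\kappa$ and $\widetilde{c}$ are constants and $Z$ is tangent to $T$, differentiating this identity tangentially gives $Z\big(\bar{g}(\overline{\nabla}u,\bar N)\big)=\kappa\,Z(u)$; the regularity $u\in C^\infty(\overline{\Omega}\backslash\Gamma)$ makes this tangential differentiation legitimate away from $\Gamma$. For the second term, I invoke the umbilicity of $T\subset S_{K,\kappa}$: because $\bar N$ is the unit normal along $S_{K,\kappa}$, which has constant principal curvature $\kappa$, the Weingarten map satisfies $\overline{\nabla}_Z\bar N=\kappa Z$ for every $Z$ tangent to $T$ (the normal component vanishes since $\bar{g}(\overline{\nabla}_Z\bar N,\bar N)=\tfrac12 Z\,\bar{g}(\bar N,\bar N)=0$). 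Hence $\bar{g}(\overline{\nabla}u,\overline{\nabla}_Z\bar N)=\kappa\,\bar{g}(\overline{\nabla}u,Z)=\kappa\,Z(u)$.

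Subtracting the two expressions yields $\overline{\nabla}^2u(Z,\bar N)=\kappa Z(u)-\kappa Z(u)=0$ on $T$, which is exactly \eqref{Hessianu}. The only point requiring care is the normalization in the Weingarten identity $\overline{\nabla}_Z\bar N=\kappa Z$: one must check it against the convention that the principal curvature of $S_{K,\kappa}$ is $\kappa\geq 0$ with respect to the chosen normal $\bar N$ (consistent, for instance, with $\partial_{\bar N}V=\kappa V$ in \eqref{V2}). This is precisely the sign that produces the cancellation, so the main — and essentially only — obstacle is bookkeeping the orientation conventions correctly; once those are fixed the cancellation is immediate and needs no further computation.
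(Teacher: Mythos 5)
Your proposal is correct and follows essentially the same argument as the paper: differentiate the Robin condition $\bar{g}(\overline{\nabla}u,\bar N)=\kappa u+\widetilde{c}$ tangentially along $T$, use metric compatibility to split off the Hessian term, and identify $\bar{g}(\overline{\nabla}u,\overline{\nabla}_Z\bar N)$ with $\kappa\,Z(u)$ via the umbilicity of $S_{K,\kappa}$, so the two contributions cancel. Your added remarks on the sign convention of the Weingarten map and the regularity needed for tangential differentiation are consistent with the paper's conventions and do not change the argument.
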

\begin{proof}
By differentiating both sides of the equation $\partial_{\bar{N}}u=\kappa u+\widetilde{c}$ with respect to $Z$, and note that $\widetilde{c}$ is a constant, we get
\begin{align*}
\kappa Z(u)&=Z\bar{g}(\overline{\nabla}u,\bar{N})\\
&=\overline{\nabla}^2u(\bar{N},Z)+\bar{g}(\overline{\nabla}u,\overline{\nabla}_Z\bar{N})\\
&=\overline{\nabla}^2u(\bar{N},Z)+h^{S_{K,\kappa}}(\overline{\nabla}u,Z)\\
&=\overline{\nabla}^2u(\bar{N},Z)+\kappa\bar{g}(\overline{\nabla}u,Z),
\end{align*}
where we have used the fact that $S_{K,\kappa}$ is an umbilical hypersurface with principal curvature $\kappa$. Hence, the assertion \eqref{Hessianu} follows.
\end{proof}
    The following is a crucial integral identity in the proof of Theorem \ref{Theorem}. For brevity, we use $P$ to refer to $P_u$, and we uniformly denote the outward normals of $\Sigma$ and $T$ as $\nu$ when we integrate over the entire boundary $\partial\Omega=\overline{\Sigma}\cup T$.
\begin{Lemma}
 Let $u\in W^{1,\infty}(\Omega)\cap W^{2,2}(\Omega)$ be a weak solution to the mixed BVP \eqref{BVP}. Then we have: for any constant $a\in\mathbb{R}$, there holds
\begin{align}
\begin{aligned}
&\int_\Omega-Vu\,\overline{\triangle}Pdx\\
=&\frac{1}{2}\int_\Sigma\left(\bar{g}(\overline{\nabla}u,\overline{\nabla}u)-a\right)\left[V(u-\varphi)_\nu-V_\nu(u-\varphi)\right]dA.
\label{arbitrary}
\end{aligned}
\end{align}
\label{integralidentity}
\end{Lemma}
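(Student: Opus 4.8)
The plan is to turn the left-hand side into boundary integrals by the divergence theorem recalled above and then to show that every contribution over $T$ cancels, leaving only the stated integral over $\Sigma$. The function $\varphi$ is engineered precisely to produce this cancellation. I would first record the elementary consequences of Proposition \ref{existphi}: writing $w:=u-\varphi$ and tracing the Hessian identity in \eqref{resolventfunction} gives $\overline{\triangle}\varphi+(n+1)K\varphi=1$, so that $\overline{\triangle}w+(n+1)Kw=0$ in $\Omega$, while subtracting the two Robin conditions gives the homogeneous condition $\partial_{\bar{N}}w=\kappa w$ on $T$. Tracing \eqref{V1} gives $\overline{\triangle}V=-(n+1)KV$, and I keep at hand the Bochner identity $\overline{\triangle}P=|\overline{\nabla}^2u-\tfrac{\overline{\triangle}u}{n+1}\bar{g}|^2$ together with the fact that $u=0$ on $\Sigma$ forces $P|_\Sigma=\tfrac12|\overline{\nabla}u|^2$ and $|\overline{\nabla}u|^2|_\Sigma=u_\nu^2$.

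Next I would eliminate the free constant $a$. Applying Green's second identity to the pair $(V,w)$, the interior integrand $V\overline{\triangle}w-w\overline{\triangle}V$ vanishes by the two Laplacian relations, and on $T$ the matched conditions $\partial_{\bar{N}}V=\kappa V$ (from \eqref{V2}) and $\partial_{\bar{N}}w=\kappa w$ give $V\partial_{\bar{N}}w-w\partial_{\bar{N}}V=0$; hence $\int_\Sigma(Vw_\nu-V_\nu w)\,dA=0$. Consequently the $a$-term in \eqref{arbitrary} drops out, and since $P|_\Sigma=\tfrac12|\overline{\nabla}u|^2$ the whole statement reduces to proving
\[
\int_\Omega -Vu\,\overline{\triangle}P\,dx=\int_\Sigma P\,(Vw_\nu-V_\nu w)\,dA.
\]

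For the main identity I would apply the divergence theorem to the field $\mathcal{F}:=PW-Vu\,\overline{\nabla}P$, where $W:=V\overline{\nabla}w-w\overline{\nabla}V$ is divergence-free (again by the two Laplacian relations). On $T$ one has $\bar{g}(W,\bar{N})=Vw_{\bar{N}}-V_{\bar{N}}w=0$, so the $PW$-part contributes only over $\Sigma$, giving exactly $\int_\Sigma P(Vw_\nu-V_\nu w)\,dA$, while the $-Vu\,\overline{\nabla}P$-part contributes $-\int_T VuP_{\bar{N}}\,dA$ (its $\Sigma$-part vanishes as $u=0$). Computing $\overline{\rm div}\,\mathcal{F}$ and using $\overline{\nabla}w-\overline{\nabla}u=-\overline{\nabla}\varphi$, the upshot is
\[
\int_\Omega -Vu\,\overline{\triangle}P\,dx=\int_\Sigma P(Vw_\nu-V_\nu w)\,dA-\int_T VuP_{\bar{N}}\,dA+\int_\Omega\big[V\bar{g}(\overline{\nabla}P,\overline{\nabla}\varphi)+(2u-\varphi)\bar{g}(\overline{\nabla}V,\overline{\nabla}P)\big]\,dx,
\]
so the statement is equivalent to the single identity $\int_\Omega[V\bar{g}(\overline{\nabla}P,\overline{\nabla}\varphi)+(2u-\varphi)\bar{g}(\overline{\nabla}V,\overline{\nabla}P)]\,dx=\int_T VuP_{\bar{N}}\,dA$.

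The main obstacle is precisely this last identity, which I would establish by integrating its left-hand side by parts once more, each time substituting the field equation $\overline{\triangle}u=1-(n+1)Ku$, the Hessian relation $\overline{\nabla}^2\varphi=(\tfrac1{n+1}-K\varphi)\bar{g}$, and $\overline{\triangle}V=-(n+1)KV$ to lower the order of the integrands. Two inputs make the boundary terms collapse. First, wherever $P_{\bar{N}}$ appears on $T$, I would invoke \eqref{Hessianu}, which kills the tangential-normal Hessian term and reduces $P_{\bar{N}}$ to $u_{\bar{N}}\overline{\nabla}^2u(\bar{N},\bar{N})+(Ku-\tfrac1{n+1})u_{\bar{N}}$. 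Second, the resulting $T$-integrals must be paired so that the $\varphi$-, $V$- and $u$-dependent pieces reassemble into multiples of the Wronskian-type expressions $Vw_{\bar{N}}-V_{\bar{N}}w$ and $V\partial_{\bar{N}}w-w\partial_{\bar{N}}V$, both of which vanish on $T$ by the matched Robin conditions exactly as in the second step. I expect this pairing of boundary terms to be the delicate part of the argument; the interior algebra, though lengthy, is routine once the three defining relations are inserted, and all integrations by parts are legitimate because $u\in C^\infty(\overline{\Omega}\setminus\Gamma)\cap W^{1,\infty}(\Omega)\cap W^{2,2}(\Omega)$, as permitted by the divergence theorem recalled above.
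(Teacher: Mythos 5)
Your reduction is correct and is organized differently from the paper's proof, in a way that is arguably more transparent. The paper starts from $\int_\Omega-Vu\,\overline{\triangle}P\,dx=-\int_{\partial\Omega}VuP_\nu\,dA+\int_{\partial\Omega}P(Vu)_\nu\,dA-\int_\Omega P\,\overline{\triangle}(Vu)\,dx$ and then pushes $\varphi$ into the second term, carrying a long list of interior and boundary terms through five or six successive integrations by parts until everything not supported on $\Sigma$ cancels; the vanishing of the Wronskian $V(u-\varphi)_{\bar N}-(u-\varphi)V_{\bar N}$ on $T$ and the identity $\int_\Sigma[Vw_\nu-V_\nu w]\,dA=0$ (which lets the arbitrary constant $a$ in) appear only at the very end. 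You instead isolate the divergence-free field $W=V\overline{\nabla}w-w\overline{\nabla}V$ at the outset, dispose of the $a$-term first, and reduce the whole lemma to the single residual identity
\begin{equation*}
\int_\Omega\Bigl[V\bar{g}(\overline{\nabla}P,\overline{\nabla}\varphi)+(2u-\varphi)\bar{g}(\overline{\nabla}V,\overline{\nabla}P)\Bigr]dx=\int_T VuP_{\bar N}\,dA.
\end{equation*}
I checked the divergence computation for $\mathcal{F}=PW-Vu\overline{\nabla}P$ and the boundary bookkeeping; both are exact, so this residual identity is genuinely equivalent to \eqref{arbitrary}.

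The gap is that this residual identity is where essentially all of the technical content of the lemma lives, and you do not prove it: you describe a strategy and explicitly defer "the pairing of boundary terms" as the delicate part. Two points make me doubt that the remaining work is as routine as claimed. First, your list of inputs (the field equation, the Hessian relation \eqref{resolventfunction}, $\overline{\triangle}V=-(n+1)KV$, \eqref{Hessianu}, and the matched Robin conditions) omits the Ricci commutation identity together with $\overline{\rm Ric}=nK\bar{g}$, which is unavoidable once you expand $\overline{\nabla}P=\overline{\nabla}^2u(\overline{\nabla}u,\cdot)+(Ku-\tfrac{1}{n+1})\overline{\nabla}u$ and integrate the term $\int_\Omega V\overline{\nabla}^2u(\overline{\nabla}u,\overline{\nabla}\varphi)\,dx$ by parts (this is exactly the step \eqref{usingRici} in the paper, and it is where third derivatives of $u$ appear and must be traded away). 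Second, the interior terms do not cancel termwise after one round of integration by parts; the paper needs the specific combination $\overline{\nabla}^2\varphi=(\tfrac{1}{n+1}-K\varphi)\bar{g}$ contracted against $\overline{\nabla}^2u$, followed by a further integration by parts, to eliminate the $\int_\Omega Vu\,\bar{g}(\overline{\nabla}^2u,\overline{\nabla}^2\varphi)\,dx$ term. So while your reduction is a real simplification of the bookkeeping and the strategy is plausible, the proposal as written stops short of a proof: to close it you must actually carry out the computation verifying the displayed residual identity, with the Ricci identity added to your toolkit.
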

\begin{proof}
Integrating by parts, we have
\begin{equation*}
\int_\Omega-Vu\,\overline{\triangle}Pdx=-\int_{\partial\Omega}VuP_\nu dA+\int_{\partial\Omega}P(Vu)_\nu dA-\int_\Omega P\,\overline{\triangle}(Vu)dx.
\end{equation*}
Using $\overline{\triangle}u+(n+1)Ku=1$ and \eqref{V1}, we have
\begin{align*}
\overline{\triangle}(Vu)&=V\,\overline{\triangle}u+u\,\overline{\triangle}V+2\bar{g}(\overline{\nabla}V,\overline{\nabla}u)\\
&=V+2\bar{g}(\overline{\nabla}V,\overline{\nabla}u)-2(n+1)KVu.
\end{align*}
Thus,
\begin{align}\label{thus}
\begin{aligned}
&\int_\Omega-Vu\,\overline{\triangle}Pdx\\
=&-\int_{\partial\Omega}VuP_\nu dA+\int_{\partial\Omega}P(Vu)_\nu dA-\int_\Omega VPdx-2\int_\Omega P\bar{g}(\overline{\nabla}V,\overline{\nabla}u)dx\\
 &+2(n+1)K\int_\Omega VuPdx.
\end{aligned}
\end{align}
Keeping track of the second term, by using an auxiliary function $\varphi$ which exists thanks to Proposition \ref{existphi}, we compute as follows:
\begin{align}
\begin{aligned}
\int_{\partial\Omega}P(Vu)_\nu dA=&\int_{\partial\Omega}P(V(u-\varphi))_\nu dA+\int_{\partial\Omega}P(V\varphi)_\nu dA\\
=&\int_{\partial\Omega}VP(u_\nu-\varphi_\nu)dA+\int_{\partial\Omega}uPV_\nu dA+\int_{\partial\Omega}VP\varphi_\nu dA\\
=&\int_{\partial\Omega}VP(u_\nu-\varphi_\nu)dA+\int_\Omega\overline{\text{div}}(uP\,\overline{\nabla}V)dx+\int_{\partial\Omega}VP\varphi_\nu dA\\
=&\int_{\partial\Omega}VP(u_\nu-\varphi_\nu)dA+\int_\Omega P\bar{g}(\overline{\nabla}V,\overline{\nabla}u)dx+\int_\Omega u\bar{g}(\overline{\nabla}V,\overline{\nabla}P)dx\\
&+\int_\Omega uP\,\overline{\triangle}Vdx+\int_{\partial\Omega}VP\varphi_\nu dA\\
=&\int_{\partial\Omega}VP(u_\nu-\varphi_\nu)dA+\int_\Omega P\bar{g}(\overline{\nabla}V,\overline{\nabla}u)dx+\int_\Omega u\bar{g}(\overline{\nabla}V,\overline{\nabla}P)dx\\
&+\int_{\partial\Omega}VP\varphi_\nu dA-(n+1)K\int_\Omega VuPdx,
\end{aligned}\label{secondterm}
\end{align}
where we have used \eqref{V1} in the last equality. Using integration by parts, we have
\begin{align}
\begin{aligned}
\int_{\partial\Omega}VP\varphi_\nu dA=&\int_\Omega VP\,\overline{\triangle}\varphi dx+\int_\Omega P\bar{g}(\overline{\nabla}V,\overline{\nabla}\varphi)dx+\int_\Omega V\bar{g}(\overline{\nabla}P,\overline{\nabla}\varphi)dx\\
=&\int_\Omega VPdx+\int_\Omega P\bar{g}(\overline{\nabla}V,\overline{\nabla}\varphi)dx+\int_\Omega V\bar{g}(\overline{\nabla}P,\overline{\nabla}\varphi)dx\\
 &-(n+1)K\int_\Omega VP\varphi dx,
\end{aligned}\label{expanding}
\end{align}
where we have used $\overline{\triangle}\varphi+(n+1)K\varphi=1$ in the last equality, then expanding $P$ in the second-to-last term in \eqref{expanding}, we get
\begin{align}
\begin{aligned}
\int_{\partial\Omega}VP\varphi_\nu dA=&\int_\Omega VPdx+\int_\Omega P\bar{g}(\overline{\nabla}V,\overline{\nabla}\varphi)dx+\int_\Omega V\overline{\nabla}^2u(\overline{\nabla}u,\overline{\nabla}\varphi)dx-\frac{1}{n+1}\int_\Omega V\bar{g}(\overline{\nabla}u,\overline{\nabla}\varphi)dx\\
 &+K\int_\Omega Vu\bar{g}(\overline{\nabla}u,\overline{\nabla}\varphi)dx-(n+1)K\int_\Omega VP\varphi dx.
\end{aligned}\label{afterexpanding}
\end{align}
By using the Ricci identity, $\overline{\triangle}u+(n+1)Ku=1$ and $\overline{\text{Ric}}=nK\bar{g}$, we have
\begin{align}
\begin{aligned}
&\int_\Omega V\overline{\nabla}^2u(\overline{\nabla}u,\overline{\nabla}\varphi)dx\\
=&\int_{\partial\Omega}Vu\overline{\nabla}^2u(\overline{\nabla}\varphi,\nu)dA-\int_\Omega u\overline{\nabla}^2u(\overline{\nabla}V,\overline{\nabla}\varphi)dx\\
&-\int_\Omega Vu\bar{g}(\overline{\nabla}\,\overline{\triangle}u,\overline{\nabla}\varphi)dx-\int_\Omega Vu\,\overline{\text{Ric}}(\overline{\nabla}u,\overline{\nabla}\varphi)dx-\int_\Omega Vu\bar{g}(\overline{\nabla}^2u,\overline{\nabla}^2\varphi)dx\\
=&\int_{\partial\Omega}Vu\overline{\nabla}^2u(\overline{\nabla}\varphi,\nu)dA-\int_\Omega u\overline{\nabla}^2u(\overline{\nabla}V,\overline{\nabla}\varphi)dx-\int_\Omega Vu\bar{g}(\overline{\nabla}^2u,\overline{\nabla}^2\varphi)dx+K\int_\Omega Vu\bar{g}(\overline{\nabla}u,\overline{\nabla}\varphi)dx.
\end{aligned}\label{usingRici}
\end{align}
Since $\varphi$ satisfies \eqref{resolventfunction}, we compute as follows:
\begin{align}
\begin{aligned}
&\int_\Omega Vu\bar{g}(\overline{\nabla}^2u,\overline{\nabla}^2\varphi)dx\\
=&\int_\Omega Vu\left(\frac{1}{n+1}-K\varphi\right)\overline{\triangle}udx\\
=&\frac{1}{n+1}\int_\Omega Vu\,\overline{\triangle}\varphi(1-(n+1)Ku)dx\\
=&\frac{1}{n+1}\int_\Omega Vu\,\overline{\triangle}\varphi dx-K\int_\Omega Vu^2\,\overline{\triangle}\varphi dx\\
=&\frac{1}{n+1}\int_{\partial\Omega} Vu\varphi_\nu dA-\frac{1}{n+1}\int_\Omega V\bar{g}(\overline{\nabla}u,\overline{\nabla}\varphi)dx-\frac{1}{n+1}\int_\Omega u\bar{g}(\overline{\nabla}V,\overline{\nabla}\varphi)dx\\
&-K\int_{\partial\Omega}Vu^2\varphi_\nu dA+2K\int_\Omega Vu\bar{g}(\overline{\nabla}u,\overline{\nabla}\varphi)dx+K\int_\Omega u^2\bar{g}(\overline{\nabla}V,\overline{\nabla}\varphi)dx,
\end{aligned}\label{2parts}
\end{align}
where we use integration by parts in the last equality. Substituting \eqref{2parts} into \eqref{usingRici}, then \eqref{afterexpanding} becomes
\begin{align*}
\begin{aligned}
\int_{\partial\Omega}VP\varphi_\nu dA=&\int_\Omega VPdx+\int_\Omega P\bar{g}(\overline{\nabla}V,\overline{\nabla}\varphi)dx+\int_{\partial\Omega}Vu\overline{\nabla}^2u(\overline{\nabla}\varphi,\nu)dA\\
&-\int_\Omega u\overline{\nabla}^2u(\overline{\nabla}V,\overline{\nabla}\varphi)dx+\frac{1}{n+1}\int_\Omega u\bar{g}(\overline{\nabla}V,\overline{\nabla}\varphi)dx-\frac{1}{n+1}\int_{\partial\Omega}Vu\varphi_\nu dA\\
&+K\int_{\partial\Omega}Vu^2\varphi_\nu dA-K\int_\Omega u^2\bar{g}(\overline{\nabla}V,\overline{\nabla}\varphi)dx-(n+1)K\int_\Omega VP\varphi dx,
\end{aligned}
\end{align*}
substituting this into \eqref{secondterm}, we can finally go back to \eqref{thus} to obtain
\begin{align}
\begin{aligned}
&\int_\Omega-Vu\,\overline{\triangle}Pdx\\
=&-\int_{\partial\Omega}VuP_\nu dA+\int_{\partial\Omega}VP(u_\nu-\varphi_\nu)dA-\int_\Omega P\bar{g}(\overline{\nabla}V,\overline{\nabla}(u-\varphi))dx+\int_\Omega u\bar{g}(\overline{\nabla}V,\overline{\nabla}P)dx\\
&+\int_{\partial\Omega} Vu\overline{\nabla}^2u(\overline{\nabla}\varphi,\nu)dA-\int_\Omega u\overline{\nabla}^2u(\overline{\nabla}V,\overline{\nabla}\varphi)dx+\frac{1}{n+1}\int_\Omega u\bar{g}(\overline{\nabla}V,\overline{\nabla}\varphi)dx-\frac{1}{n+1}\int_{\partial\Omega} Vu\varphi_\nu dA\\
&+K\int_{\partial\Omega} Vu^2\varphi_\nu dA-K\int_\Omega u^2\bar{g}(\overline{\nabla}V,\overline{\nabla}\varphi)dx+(n+1)K\int_\Omega VP(u-\varphi)dx.
\end{aligned}\label{beforeI}
\end{align}
We collect the sum of the integrals over $\Omega$ without $K$ in \eqref{beforeI}, which is
\begin{align*}
\Lambda=&-\int_\Omega P\bar{g}(\overline{\nabla}V,\overline{\nabla}(u-\varphi))dx+\int_\Omega u\bar{g}(\overline{\nabla}V,\overline{\nabla}P)dx-\int_\Omega u\overline{\nabla}^2u(\overline{\nabla}V,\overline{\nabla}\varphi)dx\\
&+\frac{1}{n+1}\int_\Omega u\bar{g}(\overline{\nabla}V,\overline{\nabla}\varphi)dx,
\end{align*}
then expanding $P$, we get
\begin{align}
\begin{aligned}
\Lambda=&-\frac{1}{2}\int_\Omega\bar{g}(\overline{\nabla}u,\overline{\nabla}u)\bar{g}(\overline{\nabla}V,\overline{\nabla}(u-\varphi))dx+\int_\Omega u\overline{\nabla}^2u(\overline{\nabla}V,\overline{\nabla}(u-\varphi))dx\\
&+\frac{K}{2}\int_\Omega u^2\bar{g}(\overline{\nabla}V,\overline{\nabla}u)dx+\frac{K}{2}\int_\Omega u^2\bar{g}(\overline{\nabla}V,\overline{\nabla}\varphi)dx,
\end{aligned}
\end{align}
where integrating by parts and \eqref{V1} gives
\begin{align*}
&-\frac{1}{2}\int_\Omega\bar{g}(\overline{\nabla}u,\overline{\nabla}u)\bar{g}(\overline{\nabla}V,\overline{\nabla}(u-\varphi))dx\\
=&-\frac{1}{2}\int_{\partial\Omega}\bar{g}(\overline{\nabla}u,\overline{\nabla}u)(u-\varphi)V_\nu dA+\int_\Omega (u-\varphi)\overline{\nabla}^2u(\overline{\nabla}V,\overline{\nabla}u)dx\\
&-\frac{n+1}{2}K\int_\Omega\bar{g}(\overline{\nabla}u,\overline{\nabla}u)V(u-\varphi)dx,
\end{align*}
by using \eqref{V1} and the Ricci identity, $\overline{\triangle}u+(n+1)Ku=1$ and $\overline{\text{Ric}}=nK\bar{g}$,
\begin{align*}
&\int_\Omega u\overline{\nabla}^2u(\overline{\nabla}V,\overline{\nabla}(u-\varphi))dx\\
=&\int_{\partial\Omega}u(u-\varphi)\overline{\nabla}^2u(\overline{\nabla}V,\nu)dA-\int_\Omega(u-\varphi)\overline{\nabla}^2u(\overline{\nabla}V,\overline{\nabla}u)dx+K\int_\Omega Vu(u-\varphi)\overline{\triangle}udx\\
&-\int_\Omega u(u-\varphi)\bar{g}(\overline{\nabla}V,\overline{\nabla}\,\overline{\triangle}u)dx-\int_\Omega u(u-\varphi)\overline{\text{Ric}}(\overline{\nabla}V,\overline{\nabla}u)dx\\
=&\int_{\partial\Omega}u(u-\varphi)\overline{\nabla}^2u(\overline{\nabla}V,\nu)dA-\int_\Omega(u-\varphi)\overline{\nabla}^2u(\overline{\nabla}V,\overline{\nabla}u)dx\\
&+K\int_\Omega Vu(u-\varphi)dx-(n+1)K^2\int_\Omega Vu^2(u-\varphi)dx+K\int_\Omega u(u-\varphi)\bar{g}(\overline{\nabla}V,\overline{\nabla}u)dx.
\end{align*}
Combining these computations, we obtain
\begin{align*}
\Lambda=&-\frac{1}{2}\int_{\partial\Omega}\bar{g}(\overline{\nabla}u,\overline{\nabla}u)(u-\varphi)V_\nu dA+\int_{\partial\Omega}u(u-\varphi)\overline{\nabla}^2u(\overline{\nabla}V,\nu)dA\\
&+K\int_\Omega Vu(u-\varphi)dx-(n+1)K^2\int_\Omega Vu^2(u-\varphi)dx+K\int_\Omega u(u-\varphi)\bar{g}(\overline{\nabla}V,\overline{\nabla}u)dx\\
&+\frac{K}{2}\int_\Omega u^2\bar{g}(\overline{\nabla}V,\overline{\nabla}u)dx+\frac{K}{2}\int_\Omega u^2\bar{g}(\overline{\nabla}V,\overline{\nabla}\varphi)dx-\frac{n+1}{2}K\int_\Omega\bar{g}(\overline{\nabla}u,\overline{\nabla}u)V(u-\varphi)dx.
\end{align*}
Substituting $\Lambda$ back into \eqref{beforeI}, we have
\begin{align}
\begin{aligned}
&\int_\Omega-Vu\,\overline{\triangle}Pdx\\
=&-\frac{1}{2}\int_{\partial\Omega}\bar{g}(\overline{\nabla}u,\overline{\nabla}u)(u-\varphi)V_\nu dA+\int_{\partial\Omega}u(u-\varphi)\overline{\nabla}^2u(\overline{\nabla}V,\nu)dA-\int_{\partial\Omega}VuP_\nu dA\\
&+\int_{\partial\Omega}VP(u_\nu-\varphi_\nu)dA+\int_{\partial\Omega} Vu\overline{\nabla}^2u(\overline{\nabla}\varphi,\nu)dA-\frac{1}{n+1}\int_{\partial\Omega} Vu\varphi_\nu dA\\
&+K\int_\Omega Vu(u-\varphi)dx-(n+1)K^2\int_\Omega Vu^2(u-\varphi)dx+K\int_\Omega u(u-\varphi)\bar{g}(\overline{\nabla}V,\overline{\nabla}u)dx\\
&+\frac{K}{2}\int_\Omega u^2\bar{g}(\overline{\nabla}V,\overline{\nabla}u)dx-\frac{K}{2}\int_\Omega u^2\bar{g}(\overline{\nabla}V,\overline{\nabla}\varphi)dx+K\int_{\partial\Omega} Vu^2\varphi_\nu dA\\
&+(n+1)K\int_\Omega VP(u-\varphi)dx-\frac{n+1}{2}K\int_\Omega\bar{g}(\overline{\nabla}u,\overline{\nabla}u)V(u-\varphi)dx.
\end{aligned}\label{afterI}
\end{align}
Again, expanding all $P$ and $P_\nu$ in \eqref{afterI}, we obtain
\begin{align*}
\begin{aligned}
&\int_\Omega-Vu\,\overline{\triangle}Pdx\\
=&-\frac{1}{2}\int_{\partial\Omega}\bar{g}(\overline{\nabla}u,\overline{\nabla}u)(u-\varphi)V_\nu dA+\int_{\partial\Omega}u(u-\varphi)\overline{\nabla}^2u(\overline{\nabla}V,\nu)dA\\
&+\frac{1}{2}\int_{\partial\Omega}\bar{g}(\overline{\nabla}u,\overline{\nabla}u)V(u-\varphi)_\nu dA-\int_{\partial\Omega} Vu\overline{\nabla}^2u(\overline{\nabla}(u-\varphi),\nu)dA-\frac{K}{2}\int_{\partial\Omega} Vu^2(u-\varphi)_\nu dA\\
&-\frac{n+1}{2}K^2\int_\Omega Vu^2(u-\varphi)dx+\frac{K}{2}\int_\Omega u^2\bar{g}(\overline{\nabla}V,\overline{\nabla}(u-\varphi))dx+K\int_\Omega u(u-\varphi)\bar{g}(\overline{\nabla}V,\overline{\nabla}u)dx,
\end{aligned}
\end{align*}
by virtue of integration by parts for the last term, we get
\begin{align*}
&K\int_\Omega u(u-\varphi)\bar{g}(\overline{\nabla}V,\overline{\nabla}u)dx\\
=&\frac{K}{2}\int_\Omega(u-\varphi)\bar{g}(\overline{\nabla}V,\overline{\nabla}u^2)dx\\
=&\frac{K}{2}\int_{\partial\Omega} u^2(u-\varphi)V_\nu dA-\frac{K}{2}\int_\Omega u^2(u-\varphi)\overline{\triangle}Vdx-\frac{K}{2}\int_\Omega u^2\bar{g}(\overline{\nabla}V,\overline{\nabla}(u-\varphi))dx\\
=&\frac{K}{2}\int_{\partial\Omega} u^2(u-\varphi)V_\nu dA+\frac{n+1}{2}K^2\int_\Omega Vu^2(u-\varphi)dx-\frac{K}{2}\int_\Omega u^2\bar{g}(\overline{\nabla}V,\overline{\nabla}(u-\varphi))dx,
\end{align*}
where we have used \eqref{V1} in the last equality. Therefore,
\begin{align}
\begin{aligned}
&\int_\Omega-Vu\,\overline{\triangle}Pdx\\
=&\frac{1}{2}\int_{\partial\Omega}\bar{g}(\overline{\nabla}u,\overline{\nabla}u)\left[V(u-\varphi)_\nu-(u-\varphi)V_\nu\right] dA+\int_{\partial\Omega}u\overline{\nabla}^2u\left((u-\varphi)\overline{\nabla}V-V\overline{\nabla}(u-\varphi),\nu\right)dA\\
&+\frac{K}{2}\int_{\partial\Omega} u^2\left[(u-\varphi)V_\nu-V(u-\varphi)_\nu\right] dA\\
=&\frac{1}{2}\int_{\partial\Omega}\bar{g}(\overline{\nabla}u,\overline{\nabla}u)\left[V(u-\varphi)_\nu-(u-\varphi)V_\nu\right] dA+\int_Tu\overline{\nabla}^2u\left((u-\varphi)\overline{\nabla}V-V\overline{\nabla}(u-\varphi),\bar{N}\right)dA\\
&+\frac{K}{2}\int_T u^2\left[(u-\varphi)V_{\bar{N}}-V(u-\varphi)_{\bar{N}}\right]dA,
\end{aligned}\label{tofinal}
\end{align}
where we note that $u=0$ on $\Sigma$.

Since $\partial_{\bar{N}}u=\kappa u+\widetilde{c},\,\,\partial_{\bar{N}}\varphi=\kappa \varphi+\widetilde{c}$ on $T$ and \eqref{V2}, we have
\begin{align}
\begin{aligned}
V(u-\varphi)_{\bar{N}}-(u-\varphi)V_{\bar{N}}=\kappa V(u-\varphi)-\kappa(u-\varphi)V=0,
\end{aligned} \label{Tzero}
\end{align}
and
\begin{align*}
\bar{g}\left((u-\varphi)\overline{\nabla}V-V\overline{\nabla}(u-\varphi),\bar{N}\right)=&(u-\varphi)V_{\bar{N}}-V(u-\varphi)_{\bar{N}}\\
=&\kappa(u-\varphi)V-\kappa V(u-\varphi)\\
=&0,
\end{align*}
which means that $(u-\varphi)\overline{\nabla}V-V\overline{\nabla}(u-\varphi)$ is a tangent vector field on $T$, hence by Lemma \ref{Hessianu}, we know
$$\overline{\nabla}^2u\left((u-\varphi)\overline{\nabla}V-V\overline{\nabla}(u-\varphi),\bar{N}\right)=0\,\,\text{on}\,\,T.$$
Therefore, \eqref{tofinal} becomes
\begin{equation*}
\int_\Omega-Vu\,\overline{\triangle}Pdx=\frac{1}{2}\int_\Sigma\bar{g}(\overline{\nabla}u,\overline{\nabla}u)\left[V(u-\varphi)_\nu-(u-\varphi)V_\nu\right] dA.
\end{equation*}
Note that \eqref{Tzero}, we have
\begin{align*}
\int_\Sigma\left[V(u-\varphi)_\nu-(u-\varphi)V_\nu\right] dA=&\int_{\partial\Omega}\left[V(u-\varphi)_\nu-(u-\varphi)V_\nu\right] dA\\
=&\int_\Omega\left[V\overline{\triangle}(u-\varphi)-(u-\varphi)\overline{\triangle}V\right]dx\\
=&0,
\end{align*}
where we have used $\overline{\triangle}u+(n+1)Ku=1,\,\,\overline{\triangle}\varphi+(n+1)K\varphi=1$ in $\Omega$ and \eqref{V1}.

So far, we have finished the proof of Proposition \ref{integralidentity}.
\end{proof}

\begin{proof}[Proof of Theorem \ref{Theorem}]
Note that in all cases, $V>0$ in $\Omega$ since $\Omega\subset B_{K,\kappa}^{\rm int}$. $V>0$ in $\Omega$, the subharmonicity of $P$ and the assumption $u\leq 0$ in $\Omega$ tell us that $$-Vu\,\overline{\triangle}P\geq 0 \,\,\text{in}\,\, \Omega.$$ Note that $\bar{g}(\overline{\nabla}u,\overline{\nabla}u)=c^2$ on $\Sigma$ since $\partial_\nu u=c$ on $\Sigma$, by taking $a=c^2$ in the integral identity \eqref{arbitrary}, we get $$\int_\Omega Vu\,\overline{\triangle}Pdx=0.$$
Hence $\overline{\triangle}P\equiv 0$ in $\Omega$ since $V>0$ and $u<0$ in $\Omega$. Since Lemma \ref{Laplician}, we know
\begin{equation}
\overline{\nabla}^2u=\left(\frac{1}{n+1}-Ku\right)\bar{g}\,\,\text{in}\,\,\Omega.\label{hess}
\end{equation}
By restricting \eqref{hess} on $\Sigma$ and using $u=0$ on $\Sigma$, we get $$c\,h_{ij}=\left(\overline{\nabla}^2u\right)_{ij}=\frac{1}{n+1}\bar{g}_{ij},$$
which means that $\Sigma$ must be part of an umbilical hypersurface with principal curvature $\frac{1}{(n+1)c}$, mean curvature $H=\frac{n}{(n+1)c}$.

Now we consider the angle characterization. Since $\Sigma$ and $S_{K,\kappa}$ are umbilical, the symmetry of umbilical hypersurfaces in conformal Euclidean space model tells us that the contact angle $\theta$ of $\Sigma$ with $S_{K,\kappa}$ must be a constant. Using $u_\nu=c$ on $\Sigma$, we have
\begin{align*}
c\int_\Sigma VdA&=\int_\Sigma Vu_\nu dA\\
&=\int_\Omega\left(V\overline{\triangle}u-u\overline{\triangle}V\right)dx+\int_T\left(uV_{\bar{N}}-Vu_{\bar{N}}\right)dA\\
&=\int_\Omega Vdx-\widetilde{c}\int_T VdA,
\end{align*}
where we have used $\overline{\triangle}u+(n+1)Ku=1$ in $\Omega$, $u_{\bar{N}}=\kappa u+\widetilde{c}$ on $T$ and \eqref{V1}, \eqref{V2}, hence
\begin{equation}\label{H2}
H=\frac{n}{(n+1)c}=\frac{n}{n+1}\frac{\int_\Sigma VdA}{\int_\Omega Vdx-\widetilde{c}\int_T VdA}.
\end{equation}
Combining \eqref{H2} and \eqref{meancurvature}, we get
$$\widetilde{c}=-\frac{n}{n+1}\cos\theta\frac{\int_TVdA}{\mathcal{M}\int_\Gamma\bar{g}(\nu,Y)ds},$$
using \eqref{div1} and \eqref{div2}, we may rearrange this to see that $\theta$ can be characterized by
\begin{align*}
\cos\theta&=-\frac{(n+1)\widetilde{c}}{n}\,\frac{\mathcal{M}\int_\Gamma\bar{g}(\nu,Y)ds}{\int_TVdA}\\
&=-\frac{(n+1)\widetilde{c}}{n}H\\
&=-\frac{(n+1)\widetilde{c}}{n}\frac{n}{(n+1)c}\\
&=-\frac{\widetilde{c}}{c}.
\end{align*}
\end{proof}

\hypertarget{AppendixA}{}\section*{Appendix A: Examples for BVP \eqref{Xie} with two horospheres as boundary.}
In \cite{guo2022partially}, the authors gave an example that the partially overdetermined BVP \eqref{Xie} admits a solution if the domain is bounded by two orthogonal horospheres ($\widetilde{c}=0$ and $\theta=\pi/2$). In this appendix, we would like to generalize that example. Precisely, we will illustrate that \eqref{Xie} admits a solution if the two boundary hypersurfaces are horospheres as well, while the contact angle $\theta$ can be arbitrary among $(0,\pi)$.

Consider two horospheres with principle curvature $\kappa=1$ in the Poincar\'{e} ball model:
\begin{align*}
  L_1:= & \left\{x\in\mathbb{B}^{n+1}~\Bigg|~|x'|^2+\left(x_{n+1}-\frac{1}{2}\right)^2=\frac{1}{4}\right\}, \\
  L_2:= & \left\{x\in\mathbb{B}^{n+1}~|~|x'|^2+(x_{n+1}+b)^2=(1-b)^2\right\},
\end{align*}
where $b\in (0,1/2)$. Let $\Omega$ be the domain bounded by $L_1$ and $L_2$, then $\partial\Omega=\overline{\Sigma}\cup T$, where $\overline{\Sigma}\subset L_1$ and $T\subset L_2$ (See Figure \ref{FigureA}). Applying the cosine law to the triangle $O_1O_2C$, we have
$$\cos\theta=-\cos(\pi-\theta)=-\frac{|O_1C|^2+|O_2C|^2-|O_1O_2|^2}{2|O_1C||O_2C|}=-\frac{1-3b}{1-b}.$$

\begin{center}
\renewcommand{\thefigure}{A}
\begin{figure}[h]
\vspace{-15pt}
\begin{tikzpicture}[scale=1.3]
    \draw[cyan,fill=gray!25] (0,-0.5) circle [radius=2];\draw[thick] (0,-0.5) circle [radius=0.01];\node[left] at (0,-0.5) {\scriptsize$O_2$};
    \draw[dashed,thick] (0,0) circle [radius=2.5];\draw[thick] (0,0) circle [radius=0.01];
    \node[left] at (0,-0.07) {\scriptsize$O$};\draw[dashed,->] (0,-2.5) to (0,2.6);\node[left] at (0,2.6) {\footnotesize{$x_{n+1}$}};
    \draw[red] (0,1.25) circle [radius=1.25];\draw[thick] (0,1.25) circle [radius=0.01];\node[left] at (0,1.25) {\scriptsize$O_1$};
    \draw[thick] (1.23718,1.07143) circle [radius=0.01];\node[below] at (1.23718,1.07143) {\scriptsize$C$};
    \node at (-1.36,1.32) {\scriptsize$L_1$};\node at (-2.15,-0.2) {\scriptsize$L_2$};
    \draw[dashed] (0,1.25)--(1.23718,1.07143)--(0,-0.5);\node at (0,0.5) {\scriptsize$\Omega$};
    \node at (0.7,1.5) {\scriptsize$T$};\node at (-0.7,0.08) {\scriptsize$\overline{\Sigma}$};
    \draw[->] (1.23718,1.07143)--(1.3,1.5068);\node[above] at (1.3,1.5068) {\scriptsize$\mu$};
    \draw[->] (1.23718,1.07143)--(1.6,0.7858);\node[right] at (1.6,0.7858) {\scriptsize$\bar{\nu}$};
    \node[right] at (1.23718,1.1) {\scriptsize$\theta$};\draw (1.28,1.0377) arc [radius=0.05,start angle=-60,end angle=108];
    \node[right] at (0,1.18) {\scriptsize$\frac{1}{2}$};\node[right] at (0,-0.5) {\scriptsize$-b$};
\end{tikzpicture}
\caption{$L_1$ and $L_2$ are two horospheres with contact angle $\theta$, and the shaded area is $B_{K,\kappa}^{\rm int}$.}
\label{FigureA}
\vspace{-15pt}
\end{figure}
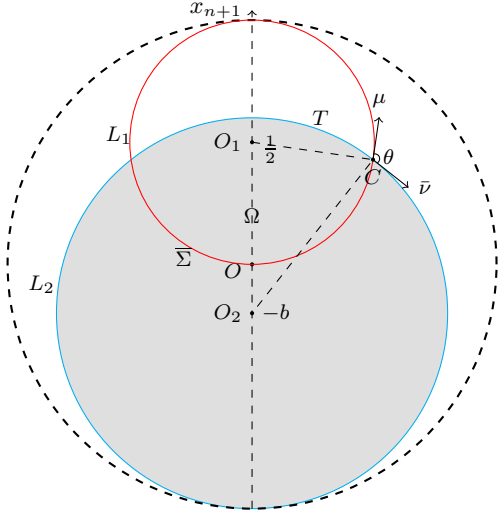
\end{center}

It has been shown in \cite{guo2022partially} that the function $u=\frac{1}{n+1}(V_3-V_1-1)$ satisfies
\begin{equation*}
\left\{
\begin{aligned}
\overline{\triangle}u-(n+1)u=1\quad&\text{in}\,\,\Omega,\\
u=0\quad&\text{on}\,\,\overline{\Sigma},\\
\partial_{\nu}u=\frac{1}{n+1}\quad&\text{on}\,\,\overline{\Sigma}.
\end{aligned}
\right.
\end{equation*}
where $V_3$ and $V_1$ are defined in \eqref{XVY3} and \eqref{XVY1} respectively. Note that the unit outward normal of $T$ is $$\bar{N}=\frac{1-|x|^2}{2(1-b)}(x+bE_{n+1}),$$
by direct computation,
\begin{align*}
  \partial_{\bar{N}}u-u &=\frac{1-|x|^2}{2(1-b)}\left[\sum_{i=1}^n x_i\frac{\partial u}{\partial x_i}+(x_{n+1}+b)\frac{\partial u}{\partial x_{n+1}}\right]-u \\
   &=\frac{3b|x|^2-x_{n+1}\left(|x|^2+2bx_{n+1}-1\right)-b}{(n+1)(1-b)(1-|x|^2)}\\
   &=\frac{1-3b}{(n+1)(1-b)},
\end{align*}
where we have used the equation of $T$ in the last equality.

Hence $u=\frac{1}{n+1}(V_3-V_1-1)$ is a solution of the partially overdetermined BVP \eqref{Xie}, with $c=\frac{1}{(n+1)}$ and $\widetilde{c}=\frac{1-3b}{(n+1)(1-b)}$. Moreover, the contact angle $\theta$ of $\overline{\Sigma}$ and $T$ satisfies
$$\cos\theta=-\frac{1-3b}{1-b}=-\frac{\widetilde{c}}{c},$$
which is consistent with the conclusion of Theorem \ref{Theorem}. We remark that $\theta$ can be arbitrary among $(0,\pi)$ since $b\in(0,1/2)$.

\hypertarget{AppendixB}{}\section*{Appendix B: Existence of weak solutions to the mixed BVP \eqref{BVP} for $\kappa=0$}
    We consider the Dirichlet-Neumann eigenvalue problem in a domain $\Omega$:
\begin{equation}\label{Dirichlet}\tag{B.1}
\left\{
\begin{aligned}
\overline{\triangle}u=-\lambda u\quad&\text{in}\,\,\Omega,\\
u=0\quad&\text{on}\,\,\overline{\Sigma},\\
\partial_{\bar{N}}u=0\quad&\text{on}\,\,T.
\end{aligned}
\right.
\end{equation}
The first Dirichlet-Neumann eigenvalue of \eqref{Dirichlet} can be variationally characterized by
$$\lambda_1(\Omega)=\text{inf}\left\{\frac{\int_\Omega\bar{g}(\overline{\nabla}u,\overline{\nabla}u)dx}{\int_\Omega u^2dx}\,\Bigg|\,0\neq u\in W_0^{1,2}(\Omega,\Sigma)\right\}.$$
If $S_{K,\kappa}$ is an umbilical hypersurface in $\mathbb{H}^{n+1}$ or $\mathbb{S}^{n+1}$ and $\Omega\subsetneqq B_{K,\kappa}^{\text{int}}$, then from \cite[Proposition 3.1-3.2]{guo2022partially} we know that
\begin{equation}\label{two}\tag{B.2}
\lambda_1(\Omega)>(n+1)K.
\end{equation}

    We now prove of the existence and uniqueness of weak solutions to the mixed BVP \eqref{BVP}.

\noindent\textbf{Proposition B.} Let $f\in C^\infty(\Omega), q\in C^\infty(T)$ and $\Omega\subsetneqq B_{K,\kappa}^{\text{int}}$. Then the mixed BVP
\begin{equation}\label{existence}\tag{B.3}
\left\{
\begin{aligned}
\overline{\triangle}u+(n+1)Ku=f\quad&\text{in}\,\,\Omega,\\
u=0\quad&\text{on}\,\,\overline{\Sigma},\\
\partial_{\bar{N}}u=q\quad&\text{on}\,\,T,
\end{aligned}
\right.
\end{equation}
admits a unique weak solution $u\in W_0^{1,2}(\Omega,\Sigma)$.
\begin{proof}
The weak solution to \eqref{existence} is defined to be $u\in W_0^{1,2}(\Omega,\Sigma)$ such that
\begin{equation*}
B[u,v]:=\int_\Omega\bar{g}(\overline{\nabla}u,\overline{\nabla}v)dx-(n+1)K\int_\Omega uvdx,\,\,\forall\,v\in W_0^{1,2}(\Omega,\Sigma),
\end{equation*}
and
\begin{equation*}
B[u,v]=-\int_\Omega fvdx+\int_T qvdA,\,\,\forall\,v\in W_0^{1,2}(\Omega,\Sigma).
\end{equation*}
If $K=-1$, then
\begin{equation*}
\begin{aligned}
B[u,u]&=\int_\Omega\bar{g}(\overline{\nabla}u,\overline{\nabla}u)dx+(n+1)\int_\Omega u^2dx\\
&\geq\int_\Omega\bar{g}(\overline{\nabla}u,\overline{\nabla}u)dx+\int_\Omega u^2dx\\
&=\|u\|^2_{W_0^{1,2}(\Omega,\Sigma)}.
\end{aligned}
\end{equation*}
If $K=1$, since $\lambda_1(\Omega)>n+1>1$, then
\begin{equation*}
\begin{aligned}
B[u,u]&=\int_\Omega\bar{g}(\overline{\nabla}u,\overline{\nabla}u)dx-(n+1)\int_\Omega u^2dx\\
&\geq\frac{\lambda_1(\Omega)-(n+1)}{\lambda_1(\Omega)+1}\int_\Omega\bar{g}(\overline{\nabla}u,\overline{\nabla}u)dx+\frac{n+2}{\lambda_1(\Omega)+1}\int_\Omega\bar{g}(\overline{\nabla}u,\overline{\nabla}u)dx-(n+1)\int_\Omega u^2dx\\
&\geq\frac{\lambda_1(\Omega)-(n+1)}{\lambda_1(\Omega)+1}\left(\int_\Omega\bar{g}(\overline{\nabla}u,\overline{\nabla}u)dx+\int_\Omega u^2dx\right)\\
&=\frac{\lambda_1(\Omega)-(n+1)}{\lambda_1(\Omega)+1}\|u\|^2_{W_0^{1,2}(\Omega,\Sigma)}.
\end{aligned}
\end{equation*}
Hence, $B[u,v]$ is coercive on $W_0^{1,2}(\Omega,\Sigma)$. The standard Lax-Milgram theorem holds for the weak formulation to \eqref{existence}. Therefore, \eqref{existence} admits a unique weak solution $u\in W_0^{1,2}(\Omega,\Sigma)$.
\end{proof}

\bibliographystyle{plain}
\bibliography{Reference}

\end{document}